\def\imod#1{\allowbreak\mkern10mu({\operator@font mod}\,\,#1)}
\newcommand{\gap}{\texttt{GAP}}
 \newcommand{\Solv}{\text{Sol}}
  \newcommand{\Sol}{\text{Sol}}
  \newcommand{\ws}{\mu_s}
\newtheorem{theorem}{Theorem}[section]
\newtheorem{lemma}[theorem]{Lemma}
\newtheorem{corollary}[theorem]{Corollary}
\newtheorem{conjecture}[theorem]{Conjecture}
\theoremstyle{definition}
\newtheorem{definition}[theorem]{Definition}
\theoremstyle{remark}
\newtheorem{remark}[theorem]{Remark}
\newtheorem{question}[theorem]{Question}
\title[Solvablizer  Numbers]{Solvablizer Numbers of Finite  Groups}
\subjclass[2010]{ 20D99, 20E34}
\keywords{Solvabilizer, solvabilizer covering, solvabilizer number.}
\author[B.~Akbari]{Banafsheh~Akbari}
\address{Department of Mathematics\\
Cornell University\\
Ithaca, NY 14853 \\
USA}
\email{b.akbari@cornell.edu}
\author[T.~Foguel]{Tuval~Foguel}
\address{Department of Mathematics and Computer Science \\
Adelphi University \\
Garden City, NY 11010\\ USA}
\email{tfoguel@adelphi.edu}
\author[J.~Schmidt]{Jack~Schmidt}
\address{Department of Mathematics\\
University of Kentucky\\
Lexington, KY 40506 \\
USA}
\email{jack.schmidt@uky.edu}
\begin{document}


\begin{abstract}
Consider a nonsolvable finite group $G$, where $R(G)$ represents the solvable radical of $G$. For any $x \in G$, the solvabilizer of $x$ in $G$ is defined as $\Solv_G(x) = \{ y \in G \mid \langle x, y\rangle\ \text{is solvable}\}$. Notably, the entirety of $G$ can be expressed as the union over all $x \in G\setminus R(G)$ of their respective solvabilizers: $G = \cup_{x\in G\setminus R(G)}\Solv_G(x)$. A solvabilizer covering of $G$ is characterized by a set $X\subset G\setminus R(G)$ such that $G = \cup_{x\in X}\Solv_G(x)$. The solvabilizer number of $G$ is then defined as the minimum cardinality among all solvabilizer coverings of $G$. This paper delves into the exploration of the solvabilizer number for diverse nonsolvable finite groups $G$, shedding light on the interplay between solvability and the structure of these groups.
\end{abstract}

\maketitle
\section{Introduction}
This paper is solely focused on finite groups. In 1968, John G. Thompson demonstrated in [\cite{Thompson}, Corollary 2] that a finite group $G$ is soluble if and only if every two-generated subgroup of $G$ is soluble. This discovery motivates the introduction of the solvabilizer of $x$ in $G$, denoted as $$\Solv_G(x) = \{ y\in G\mid \langle x, y\rangle\ \text{is solvable}\}.$$ 
In 2006 \cite{GKPS}, Guralnick et al. extended Thompson's theorem, proving that the solvable radical of a finite group $G$ coincides with the set of elements $x\in G$ for which $\Solv_G(x)=G$. This insight underscores the significance of the solvabilizer, which operates similarly to the centralizer in a group, while the solvable radical $R(G)$ plays a role akin to the center of a group. Analogous to how the center corresponds to the set of elements $x\in G$ for which $C_G(x)=G$, the centralizer of $x\in G$ can be defined analogously as $C_G(x)=\{ y\in G\mid \left\langle x, y\right\rangle\ \text{is abelian}\}$. It's crucial to note that $\Solv_G(x)$, unlike the centralizer, may not necessarily form a subgroup. 

A group is considered covered by a collection of subsets if it can be expressed as the union of that collection. It's clear that if $G$ is nonsolvable, then $G = \cup_{x\in G\setminus R(G)}\Solv_G(x)$. A solvabilizer covering of $G$ is a set $X\subset G\setminus R(G)$ such that $G = \cup_{x\in X}\Solv_G(x)$. Similar to the covering number of a group \cite{GKS}, we introduce the solvabilizer number of $G$, defined as the minimum cardinality among all solvabilizer coverings of $G$. This paper delves into the investigation of the solvabilizer number for various nonsolvable finite groups $G$.

In a parallel manner to the covering number, we establish that for any nonsolvable group, the solvabilizer number is $\ge 3$, mirroring the covering number of a group also being $\ge 3$. Moreover, we demonstrate that the solvabilizer number of $A_5$ is three. Our exploration extends to investigating the solvabilizer number of Cartesian products and scrutinizing the solvabilizer number for various families of simple groups. Additionally, we leverage \cite{GAP4} to determine and/or establish bounds on the solvabilizer number for numerous smaller nonsolvable groups.
\section{Theorems}
In this section, we will articulate and demonstrate several theorems concerning the solvabilizer number.
\begin{theorem}\label{quotient}
Let $G$ be a nonsolvable group such that $R(G)\neq 1$. The set $X\subset G \setminus R(G)$ is a solvablizer covering of $G$ if and only if the set $XR(G)\subset G/R(G)$ consisting of all cosets $xR(G)$, $x\in X$, is a solvablizer covering of $G/R(G)$.
\end{theorem}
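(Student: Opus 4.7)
The plan is to reduce the theorem to a single bilateral fact: for $x, y \in G$, the subgroup $\langle x, y\rangle$ is solvable if and only if the subgroup $\langle xR(G), yR(G)\rangle$ of $G/R(G)$ is solvable. Once this is in hand, the equivalence of the two covering conditions will follow by routine set-theoretic bookkeeping through the projection $\pi \colon G \to G/R(G)$.

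First I would record the key solvability fact. The forward direction is immediate since quotients of solvable groups are solvable. For the reverse direction, set $H = \langle x, y\rangle$ and note that $HR(G)/R(G) \cong H/(H \cap R(G))$ is solvable by hypothesis, while $H \cap R(G) \leq R(G)$ is a subgroup of a solvable group hence solvable; thus $H$ is a solvable-by-solvable extension and so solvable. This is precisely where the hypothesis that $R(G)$ is solvable enters.

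From this fact I would deduce $\Sol_G(x) = \pi^{-1}\!\left(\Sol_{G/R(G)}(xR(G))\right)$; in particular, each solvabilizer in $G$ is a union of cosets of $R(G)$. I would also note that, because $R(G/R(G)) = 1$, the condition $X \subset G \setminus R(G)$ is precisely equivalent to $XR(G) \subset (G/R(G)) \setminus R(G/R(G))$, so the defining domain condition for solvabilizer coverings matches on both sides.

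To finish, I would pass the equation $G = \bigcup_{x \in X} \Sol_G(x)$ through $\pi$. Surjectivity of $\pi$ turns a covering of $G$ into a covering of $G/R(G)$; conversely, given a covering of $G/R(G)$ by the $\Sol_{G/R(G)}(xR(G))$, taking preimages and using that each $\Sol_G(x)$ is $R(G)$-saturated recovers a covering of $G$. Since every step is a genuine equivalence, the theorem follows. I do not expect a real obstacle here: the only slightly delicate point is the backward direction of the key solvability fact, and that is handled in one line by the solvable-by-solvable argument above.
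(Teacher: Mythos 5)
Your proposal is correct and follows essentially the same route as the paper: the paper's entire proof is to invoke the identity $\Sol_{G/R(G)}(xR(G)) = \Sol_G(x)/R(G)$ (cited as \cite[Lemma 2.5]{ALMM}) and then do the same coset/preimage bookkeeping you describe. The only difference is that you prove that key identity from scratch via the solvable-by-solvable extension argument rather than citing it, which is a perfectly sound (and self-contained) way to fill in the same step.
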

\begin{proof}
Use the following as a property of the solvabilizer sets {\rm (\cite[Lemma 2.5]{ALMM})} to get the desired result.
$$\Sol_{G/R(G)} (xR(G)) = \Sol_G (x)/R(G).$$
\end{proof}
Using Theorem \ref{quotient}, we can reduce the solvablizer covering problem to Fitting-free groups, for example, direct and wreath products of almost simple groups.

\begin{remark}\label{solofinvolution} If $x,y\in G$ are involutions, then  $\left\langle x, y\right\rangle\cong D_{2n}$  where $n$ is the order of $xy$ and thus is solvable.
\end{remark}
\begin{theorem}
If $G$ is a nonabelian simple group,  $x\in G$  is an involution, then $\Solv_G(x)$ is not a subgroup.\end{theorem}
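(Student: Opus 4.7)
My plan is to prove the contrapositive via a short contradiction argument: assume $H := \Solv_G(x)$ is a subgroup of $G$, and derive that $H$ must equal $G$, which will force $x$ to lie in $R(G) = 1$ and contradict that $x$ has order $2$.

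The first step is to show that $H$ is closed under conjugation by every element of $G$. For any $g \in G$, the conjugate $x^g = g^{-1}xg$ is again an involution, so by Remark \ref{solofinvolution} the subgroup $\langle x, x^g\rangle$ is dihedral and hence solvable. Therefore $x^g \in \Solv_G(x) = H$, and so the entire conjugacy class $x^G$ is contained in $H$.

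If $H$ is a subgroup, it then contains the normal closure $\langle x^G\rangle$, which is a normal subgroup of $G$ containing the nontrivial element $x$. Since $G$ is nonabelian simple, $R(G)=1$ and the only nontrivial normal subgroup of $G$ is $G$ itself, so $\langle x^G\rangle = G$ and consequently $H = G$. Invoking the Guralnick--Kunyavskii--Plotkin--Shalev theorem from \cite{GKPS} cited in the introduction, the identity $\Solv_G(x) = G$ forces $x \in R(G) = 1$, contradicting the fact that $x$ is an involution. This contradiction shows that $\Solv_G(x)$ cannot be a subgroup.

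The argument uses essentially two inputs: the dihedral observation from Remark \ref{solofinvolution}, which transfers the involution hypothesis into the closure of $H$ under conjugation, and the characterization of $R(G)$ from \cite{GKPS}, which packages up what ``$\Solv_G(x) = G$'' means. There is no real obstacle; the only subtle point is the combinatorial observation that closure of $H$ under $G$-conjugation (as a subset) already forces $H = G$ once $H$ is assumed to be a subgroup, because of simplicity. If one wanted a version without appealing to \cite{GKPS}, the final step could instead be replaced by observing that a proper subgroup $H = G$ is absurd directly, but the cleanest route is the one outlined above.
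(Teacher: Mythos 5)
Your proof is correct and follows essentially the same route as the paper's: both observe that $\Solv_G(x)$ contains a conjugation-invariant set of involutions (the paper uses all involutions, you use the class $x^G$), so if it were a subgroup it would contain a nontrivial normal subgroup and hence equal $G$ by simplicity, whereupon the characterization of $R(G)$ from \cite{GKPS} forces $x\in R(G)=1$, a contradiction. The only quibble is your closing aside: the final step cannot simply be ``replaced by observing that $H=G$ is absurd directly''---the appeal to \cite{GKPS} (or an equivalent fact) is genuinely needed to turn $\Solv_G(x)=G$ into $x\in R(G)$.
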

\begin{proof}
Since $\Solv_G(x)$ contains all involutions in $G$ and $G$ is nonabelian simple group if $\Solv_G(x)$ was a subgroup it will equal $G$ and force $x\in R(G)$.
\end{proof}

\begin{definition} A solvablizer covering of $G$ is a set $X\subset G \setminus R(G)$, such that $G=\cup_{x\in X}\Solv_G(x)$, and the solvablizer number of $G$ is the minimum cardinality of all solvablizer covering of $G$.
	A minimal solvablizer covering of $G$ is a solvablizer covering $X\subset G\setminus R(G)$, such that $\mid X\mid=$  the solvablizer number of $G$. For convenience, we denote the solvabilizer number of $G$ by $\alpha(G)$.
\end{definition} 
\begin{lemma} If $x\in G$ a finite group, then for any $n$ we see that $\Solv_G(x)\subseteq \Solv_G(x^n)$.\end{lemma}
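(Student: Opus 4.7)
The plan is to unwind the definitions and observe that the containment is really just a statement about subgroups of solvable groups. Let me fix $y \in \Solv_G(x)$, so by definition $\langle x, y \rangle$ is solvable. I want to conclude that $y \in \Solv_G(x^n)$, i.e.\ that $\langle x^n, y \rangle$ is solvable.

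The key observation is that $x^n \in \langle x \rangle \subseteq \langle x, y \rangle$ and $y \in \langle x, y \rangle$, so the two generators of $\langle x^n, y \rangle$ both lie inside the solvable group $\langle x, y \rangle$. Hence $\langle x^n, y \rangle \leq \langle x, y \rangle$, and since subgroups of solvable groups are solvable, $\langle x^n, y \rangle$ is solvable. Therefore $y \in \Solv_G(x^n)$, which gives the desired inclusion $\Solv_G(x) \subseteq \Solv_G(x^n)$.

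There is essentially no obstacle here: the argument is a one-line application of the fact that subgroups of solvable groups are solvable, together with the trivial observation that $\langle x^n \rangle \leq \langle x \rangle$. One might note in passing that the inclusion can be strict (e.g.\ if $n$ is a multiple of the order of $x$, then $x^n = 1$ and $\Solv_G(x^n) = G$), which is why the lemma is only an inclusion rather than an equality.
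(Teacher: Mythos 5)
Your proof is correct and is essentially the paper's own argument spelled out in full: the paper simply notes that if $\langle x, y\rangle$ is solvable then so is $\langle x^n, y\rangle$, which is exactly your observation that $\langle x^n, y\rangle \leq \langle x, y\rangle$ and subgroups of solvable groups are solvable. Nothing further is needed.
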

\begin{proof} Note that if $\left\langle x, y\right\rangle$ is solvable, then so is $\left\langle x^n, y\right\rangle$.\end{proof}

\begin{corollary}\label{primes} If $G$ is a finite nonabelian simple group, then it has a minimal solvablizer covering composed of elements of prime orders.\end{corollary}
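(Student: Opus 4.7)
The plan is to start with any minimal solvabilizer covering $X=\{x_1,\dots,x_k\}$ of $G$ (so $k=\alpha(G)$) and replace each $x_i$ by a carefully chosen power of prime order, using the preceding lemma to ensure the new set still covers $G$.

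Concretely, for each $i$ let $m_i$ denote the order of $x_i$. Because $G$ is nonabelian simple we have $R(G)=1$, and since $x_i\in G\setminus R(G)$ we know $x_i\ne 1$, so $m_i\ge 2$ and $m_i$ admits a prime divisor $p_i$. Set $y_i := x_i^{m_i/p_i}$, which is an element of prime order $p_i$, and hence in particular nontrivial, so $y_i\in G\setminus R(G)$. By the preceding lemma, $\Solv_G(x_i)\subseteq \Solv_G(y_i)$, so
\[
G \;=\; \bigcup_{i=1}^k \Solv_G(x_i) \;\subseteq\; \bigcup_{i=1}^k \Solv_G(y_i) \;\subseteq\; G.
\]
Thus $Y:=\{y_1,\dots,y_k\}\subseteq G\setminus R(G)$ is a solvabilizer covering of $G$ consisting of prime-order elements.

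Finally, $|Y|\le k=\alpha(G)$, and by the definition of $\alpha(G)$ as the minimum size of a solvabilizer covering we must have $|Y|=\alpha(G)$, so $Y$ is itself a minimal solvabilizer covering. This gives the desired minimal covering composed of prime-order elements.

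There is no real obstacle here: the argument is essentially a one-line application of the lemma together with the fact that $R(G)=1$ for a nonabelian simple group (which guarantees that the prime-order replacements $y_i$ remain in $G\setminus R(G)$). The only thing to watch is that passing from $x_i$ to $y_i$ could a priori introduce repetitions among the $y_i$, but this only shrinks $|Y|$ and so is absorbed by the minimality argument.
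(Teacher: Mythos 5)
Your argument is correct and is exactly the intended one: the paper states this corollary immediately after the lemma $\Solv_G(x)\subseteq\Solv_G(x^n)$ and leaves the proof implicit, and your replacement of each $x_i$ by a prime-order power, combined with the minimality of $\alpha(G)$, is precisely that argument. The remark about possible repetitions among the $y_i$ is a nice touch but, as you note, is absorbed by minimality.
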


\begin{question}
Is there a nonsolvable group $G$ with $\alpha(G)=2$?
\end{question}
We can use the following theorem to answer this question.
\begin{theorem} [\cite{GKPS}]\label{GKPS-nonsolvable}
Let $G$ be a finite group. Suppose that $x$ and $y$ are not in $R(G)$. Then there
exists an element $s\in G$ such that $\langle x, s\rangle$ and $\langle y, s\rangle$ are not solvable.
\end{theorem}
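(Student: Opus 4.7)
The plan is to reduce to the Fitting-free case via Theorem~\ref{quotient}, then to an almost simple group, and finally to appeal to random-generation results for finite simple groups.

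First, using Theorem~\ref{quotient} together with the identity $\Solv_{G/R(G)}(xR(G))=\Solv_G(x)/R(G)$, I would reduce to the case $R(G)=1$, in which $x$ and $y$ are nontrivial in $G$. Then $F^*(G)=T_1\times\cdots\times T_k$ is a direct product of nonabelian finite simple groups, $C_G(F^*(G))=1$, and $G\le\mathrm{Aut}(F^*(G))$. By inspecting how $x$ and $y$ act on the set $\{T_i\}$ and on each individual factor, the problem reduces to the almost simple case with socle some $T$, or to a componentwise construction when the simple factors on which $x$ and $y$ act nontrivially are disjoint.

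In the almost simple case, I would seek $s$ such that each of $\langle x,s\rangle$ and $\langle y,s\rangle$ projects onto $T$ (and is therefore nonsolvable). This rests on deep structural results for finite simple groups, in particular random-generation theorems in the spirit of Liebeck--Shalev and Guralnick--Kantor: for $|T|$ large, the proportion of $s\in T$ with $\langle x,s\rangle=T$ tends to $1$, so even for two distinct ``targets'' $x,y$ a common ``good'' $s$ exists with positive density. For the small simple groups, a direct verification (aided by \gap{} when necessary) completes the argument.

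The main obstacle is the small simple-group case. For example, already in $A_5$ the proportion of $s$ with $\langle x,s\rangle=A_5$ is only $2/5$ when $x$ is an involution, so a bare density argument does not immediately yield a common $s$ for two such elements; in particular the naive union bound $|\Solv_G(x)|+|\Solv_G(y)|<|G|$ fails in this case. One must instead exploit the specific structure of the group (here, the fact that most $5$-cycles $s$ satisfy $\langle x,s\rangle=\langle y,s\rangle=A_5$ simultaneously) to produce $s$. Carrying out this analysis uniformly across all simple families provided by CFSG is the technical heart of the argument in~\cite{GKPS}.
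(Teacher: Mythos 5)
The paper does not prove this statement at all: it is quoted from \cite{GKPS} and used as a black box, its only role being to deduce Corollary~\ref{alphag2} ($\alpha(G)>2$). So there is no in-paper argument to compare against, and your proposal must be judged as an attempted reconstruction of the proof in \cite{GKPS}. At the level of strategy it is faithful to that proof --- pass to $G/R(G)$, use $F^*(G)$ a product of nonabelian simple groups with $C_G(F^*(G))=1$, reduce to (almost) simple groups, and finish with a CFSG-driven case analysis --- but as written it is an outline rather than a proof: the two essential steps are asserted, not established.

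First, the reduction from the general Fitting-free case to the almost simple case is genuinely delicate. The elements $x$ and $y$ may permute the simple factors of $F^*(G)$ nontrivially and may act nontrivially over different, overlapping sets of factors, and one must manufacture a \emph{single} element $s$ that works for both simultaneously; ``inspecting how $x$ and $y$ act'' does not yet accomplish this. Second, and more seriously, the almost simple case is exactly where the content lies, and the random-generation heuristic you invoke does not close it. The claim that the proportion of $s$ with $\langle x,s\rangle=T$ tends to $1$ as $|T|\to\infty$ is not true uniformly in $x$ (for elements of small support in alternating groups, or transvection-like elements in classical groups, this proportion is bounded away from $1$), and what a union bound would actually require is that each of $x$ and $y$ fails with proportion less than $1/2$ --- which, as you yourself observe for involutions in $A_5$, can fail even for the weaker condition that $\langle x,s\rangle$ be nonsolvable. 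Your closing remark that carrying out this analysis uniformly across all simple families ``is the technical heart of the argument in \cite{GKPS}'' is accurate, but it is precisely an admission that the heart of the proof is missing: the proposal is a correct roadmap with the destination left unreached.
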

\begin{corollary}\label{alphag2}
Given a finite nonsolvable group $G$, $\alpha(G)>2$.
\end{corollary}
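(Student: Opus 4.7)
The plan is a short proof by contradiction using Theorem \ref{GKPS-nonsolvable} directly, together with the characterization of $R(G)$ from \cite{GKPS} already cited in the introduction.

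First I would dispose of the case $\alpha(G)=1$. If there were a single element $x\in G\setminus R(G)$ with $G=\Solv_G(x)$, then by the Guralnick--Kunyavski\u\i--Plotkin--Shalev characterization (i.e., $\Solv_G(x)=G\iff x\in R(G)$) we would have $x\in R(G)$, a contradiction. Also $\alpha(G)\ne 0$, since $G$ is nonempty and a covering by zero solvabilizers is impossible.

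Next I would rule out $\alpha(G)=2$. Suppose for contradiction there exist $x,y\in G\setminus R(G)$ with
\[
G=\Solv_G(x)\cup\Solv_G(y).
\]
Apply Theorem \ref{GKPS-nonsolvable} to this pair $x,y$: it produces an element $s\in G$ such that both $\langle x,s\rangle$ and $\langle y,s\rangle$ are nonsolvable. By the definition of the solvabilizer, this means $s\notin\Solv_G(x)$ and $s\notin\Solv_G(y)$, contradicting the assumed covering.

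Since neither $\alpha(G)=1$ nor $\alpha(G)=2$ is possible, we conclude $\alpha(G)>2$. There is no real obstacle here: the entire content is packaged in Theorem \ref{GKPS-nonsolvable}, and the corollary is essentially the observation that this theorem says precisely ``no two solvabilizers of non-radical elements can cover $G$.'' The only thing to be careful about is to also handle the trivial case $\alpha(G)=1$ so that ``$\alpha(G)>2$'' is literally correct rather than just ``$\alpha(G)\ne 2$.''
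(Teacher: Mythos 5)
Your proof is correct and rests on the same key ingredient as the paper's, namely Theorem \ref{GKPS-nonsolvable}; the paper additionally splits into the cases $R(G)=1$ and $R(G)\neq 1$, invoking Theorem \ref{quotient} for the latter, whereas you apply Theorem \ref{GKPS-nonsolvable} directly, which is legitimate since it is stated for arbitrary finite groups and arbitrary $x,y\notin R(G)$. Your explicit handling of $\alpha(G)=1$ via the characterization $\Solv_G(x)=G\iff x\in R(G)$ is a reasonable extra bit of care (one could also just take $y=x$ in Theorem \ref{GKPS-nonsolvable}).
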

\begin{proof} 
In the case where $R(G)=1$, this is a straightforward result of Theorem \ref{GKPS-nonsolvable}. We can use Theorem \ref{quotient} to show the similar result when $R(G)\neq 1$.
\end{proof}

By using \cite{GAP4},  we found three involutions whose solvablizers cover $A_5$.  So we can state the following theorem.   
\begin{theorem}
 The solvablizer number of $A_5$ is $3$.
\end{theorem}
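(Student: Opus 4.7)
The plan is to combine the lower bound $\alpha(A_5)\geq 3$ from Corollary \ref{alphag2} with an explicit solvabilizer covering of size three. I take all three elements to be involutions; by Remark \ref{solofinvolution}, every involution (and the identity) already lies in $\Sol_G(x)$ whenever $x$ is an involution, so it suffices to cover the 20 three-cycles and the 24 five-cycles of $A_5$.

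Since $A_5$ is nonabelian simple of order 60, its only nonsolvable subgroup is itself, so for any $x,y\in A_5$ the subgroup $\langle x,y\rangle$ is solvable iff it is proper in $A_5$. For an involution $x$ and a 3-cycle $y$, this happens iff they share a common $A_4$-subgroup (so $\langle x,y\rangle\le A_4$) or $x$ inverts $y$ (so $\langle x,y\rangle\cong S_3$); for $y$ a 5-cycle it happens iff $x$ inverts $y$ (yielding $\langle x,y\rangle\cong D_{10}$).

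My candidate triple is $x_1=(2,3)(4,5)$, $x_2=(1,3)(4,5)$, $x_3=(1,2)(4,5)$. These lie, respectively, in the point-stabilizer $A_4$-subgroups fixing $1$, $2$, and $3$; between them these three $A_4$'s contain every 3-cycle in $A_5$ except the two with support $\{1,2,3\}$, namely $(1,2,3)$ and $(1,3,2)$. A direct check shows $x_i(1,2,3)x_i^{-1}=(1,3,2)$ for each $i$, so each $x_i$ inverts $(1,2,3)$ and the two missing 3-cycles are covered via the $S_3$-case.

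The main obstacle is verifying the 24 five-cycles. Each involution of $A_5$ lies in exactly two of the six Sylow $5$-normalizers (each $\cong D_{10}$) and inverts the four nontrivial powers of the generator in each, so inverts $8$ five-cycles in total. Thus three involutions invert at most $3\cdot 8=24$ five-cycles, with equality iff no two of them share a $D_{10}$. The proof therefore reduces to the combinatorial check that the six $D_{10}$-memberships of $x_1,x_2,x_3$ are pairwise distinct; this can be settled either by explicitly enumerating the Sylow 5-normalizers containing each $x_i$, or computationally via \gap\ as the authors indicate.
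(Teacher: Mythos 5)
Your proposal is correct and proves somewhat more than the paper does: the paper's own argument is a one-line appeal to \gap{} ("we found three involutions whose solvabilizers cover $A_5$") combined with the lower bound $\alpha(A_5)>2$ of Corollary~\ref{alphag2}, whereas you exhibit an explicit triple and reduce the verification to a small hand check. Your case analysis is sound: for an involution $x$ and $y\in A_5$, $\langle x,y\rangle$ is solvable iff it is proper, and the proper overgroups force exactly the conditions you state (common $A_4$ or inversion for $3$-cycles, inversion for $5$-cycles); your counts (each involution lies in one point-stabilizer $A_4$ and in exactly two of the six Sylow $5$-normalizers, inverting $8$ five-cycles) are all right. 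The one step you defer can be closed without \gap: two distinct involutions of $A_5$ lie in a common $D_{10}$ iff they generate it, iff their product has order $5$ (a $D_{10}$ contains no $V_4$ or $S_3$); for your triple the pairwise products $x_ix_j$ are the $3$-cycles $(1\,2\,3)^{\pm1}$ — indeed $\{1,x_1,x_2,x_3,(1\,2\,3),(1\,3\,2)\}$ is a copy of $S_3$ — so no two of the $x_i$ share a Sylow $5$-normalizer, the six memberships exhaust all six normalizers, and all $24$ five-cycles are covered. With that sentence added, your argument is a complete, computer-free proof, which is a genuine improvement in self-containedness over the paper's computational verification; the trade-off is only length.
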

\begin{remark}
    It is worth noting that any covering of $A_5$ by three solvablizers  will be a covering by three solvablizers of involutions.
\end{remark}
\begin{definition}
Given a nonsolvable group $G$, we will say that elements $x,y\in G$ are non-Solvablized if $\left\langle x, y\right\rangle$ is  not a solvable group.
The cardinality of a maximal subset of pairwise non-Solvablized elements of $G$ is denoted by $\ws(G)$.
\end{definition}
\begin{theorem}
 Given a nonsolvable finite group $G$, $\alpha(G)\le \ws(G)$.
\end{theorem}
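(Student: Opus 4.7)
The plan is to take a pairwise non-solvablized set $S \subset G$ witnessing $|S| = \mu_s(G)$ and show that $S$ itself is a solvablizer covering of $G$, which immediately yields $\alpha(G) \le |S| = \mu_s(G)$.

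First I would verify that $S \subset G \setminus R(G)$, so that $S$ is eligible to be a solvablizer covering in the sense of the definition. Since $G$ is nonsolvable, Thompson's theorem (cited in the introduction) produces a pair of elements generating a nonsolvable subgroup, so $\mu_s(G) \ge 2$. Now if some $r \in S$ lay in $R(G)$, then by the Guralnick--Kunyavski\u\i--Plotkin--Shalev characterization we would have $\Sol_G(r) = G$, meaning $\langle r, y \rangle$ is solvable for every $y \in G$; in particular $r$ could not form a non-solvablized pair with any other element of $S$, contradicting $|S| \ge 2$ and the pairwise non-solvablized property. Hence $S \subset G \setminus R(G)$.

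Next I would prove the covering property by a short maximality argument. Fix an arbitrary $g \in G$. If $g \in S$, then $\langle g, g \rangle = \langle g \rangle$ is cyclic, hence solvable, so $g \in \Sol_G(g) \subset \bigcup_{x \in S} \Sol_G(x)$. If $g \notin S$, then by maximality of $S$ the enlarged set $S \cup \{g\}$ fails to be pairwise non-solvablized; since $S$ itself is pairwise non-solvablized, the failure must come from $g$, so there exists $x \in S$ with $\langle x, g \rangle$ solvable, i.e.\ $g \in \Sol_G(x)$. In either case $g$ is covered, so $G = \bigcup_{x \in S}\Sol_G(x)$ and $\alpha(G) \le |S| = \mu_s(G)$.

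There is really no hard step here; the only place one must be slightly careful is the eligibility check $S \cap R(G) = \emptyset$, which is why I separated it out and used the Guralnick et al.\ characterization of $R(G)$ rather than just Thompson's theorem. Everything else is a one-line application of the maximality of $S$.
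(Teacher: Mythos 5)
Your proof is correct and follows essentially the same route as the paper: take a maximal pairwise non-solvablized set and use maximality to show its solvabilizers cover $G$. Your additional check that the set avoids $R(G)$ (via the Guralnick et al.\ characterization) is a detail the paper's proof glosses over, and it is a worthwhile inclusion, but it does not change the argument.
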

\begin{proof}
Let $X$ be a set of a maximal subset of pairwise non-Solvablized elements of $G$. Given $y\in G\setminus X$ there is an $x\in X$ with  $\left\langle x, y\right\rangle$ is a solvable group by the maximality of $X$. Therefore $G=\cup_{x\in X}\Solv_G(x)$.
\end{proof}
\begin{definition}
    If $G$ is a two generated noncyclic finite group, then we denote by $\mu(G)$ the maximal order subset of pairwise generators of $G$ \cite{Britnell}.
\end{definition}
\begin{remark}
    If $G$ is a minimal simple group, the $\ws(G)=\mu(G)$. So for a minimal simple group, $\alpha(G)\le \mu(G)$.
\end{remark}
\begin{remark}
 Note that the  set  below is a maximal subset of pairwise generators of $A_5$ which is a minimal simple group.
$$ \{(1\ 2\ 3), (3\ 4\ 5), (1\ 2\ 3\ 4\ 5), (1\ 2\ 3\ 5\ 4),(1\ 2\ 4\ 3\ 5),
(1\ 2\ 4\ 5\ 3), (1\ 2\ 5\ 3\ 4), (1\ 2\ 5\ 4\ 3)\}.$$
And $\alpha(A_5)=3\le \mu(A_5)=8$.
\end{remark}

\begin{definition}
    The covering number of a finite group $G$ is $\sigma(G)$ is the least integer $k$ such that $G$ is the union of $k$ proper subgroups .
\end{definition}
Clearly, given a two generated noncyclic finite group $G$, $\mu(G)\le \sigma(G)$.

\begin{corollary}
    If $G$ is a minimal simple group, then $\alpha(G)\le \sigma(G)$.
\end{corollary}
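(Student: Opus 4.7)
The plan is to chain together the three inequalities already established in the excerpt. Specifically, we have shown (i) $\alpha(G) \le \ws(G)$ for every nonsolvable finite group, (ii) $\ws(G) = \mu(G)$ when $G$ is a minimal simple group, and (iii) $\mu(G) \le \sigma(G)$ whenever $G$ is a two-generated noncyclic finite group. If each of these applies to our minimal simple $G$, the conclusion follows from
\[
\alpha(G) \le \ws(G) = \mu(G) \le \sigma(G).
\]

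So the only real content is to verify that a minimal simple group $G$ fits the hypotheses of all three facts. First I would note that a minimal simple group is by definition a nonabelian simple group, hence nonsolvable (so Theorem on $\alpha(G) \le \ws(G)$ applies) and minimal simple (so the remark giving $\ws(G) = \mu(G)$ applies). For the last inequality I must justify that $G$ is two-generated and noncyclic; noncyclicity is immediate since a nonabelian group cannot be cyclic, and the fact that every finite nonabelian simple group is two-generated is a standard classical result that $\mu(G)$ is even defined in this setting (the definition of $\mu(G)$ itself requires $G$ to be two-generated noncyclic).

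I expect no genuine obstacle: the proof is essentially bookkeeping. The one place a reader might pause is step (iii), where we must invoke the two-generation of $G$; I would cite that this is a well-known property of finite simple groups (and is implicit in the paper's earlier use of $\mu(A_5)$). With that remarked, the chain of inequalities closes and the corollary follows in one line.
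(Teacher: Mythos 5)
Your proposal is correct and matches the paper's (implicit) argument exactly: the corollary is just the chain $\alpha(G)\le\ws(G)=\mu(G)\le\sigma(G)$ assembled from the preceding theorem, remark, and observation. Your extra care in noting that a minimal simple group is two-generated and noncyclic (so that $\mu(G)$ is defined and $\mu(G)\le\sigma(G)$ applies) is a reasonable point the paper leaves unstated.
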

\begin{remark}
 Note that the  $\sigma(A_5)=10>\mu(A_5)=8>\alpha(A_5)= 3$ \cite{GKS}.
\end{remark}

Now the following questions arises naturally.
\begin{question}
\

Are there examples of  nonsolvable groups, with $\ws(G)$ coincides with the solvablizer number?

Are there examples of  minimal simple groups, with $\mu(G)$ coincides with the solvablizer number?

And is it true that for any nonsolvable group $G$,  $\alpha(G)\le \sigma(G)$? 
\end{question}

\section{Products}

In this section, we will formulate and demonstrate several theorems related to the solvabilizer number of Cartesian products.
\newcommand\PSL[2]{\operatorname{PSL}_{#1}\left(#2\right)}
\newcommand\GL[2]{\operatorname{GL}_{#1}\left(#2\right)}
\newcommand\GammaL[2]{\operatorname{\Gamma{}L}_{#1}\left(#2\right)}
\newcommand\GF[1]{\mathbb{F}_{#1}}
\begin{lemma}
    If $G$ is a nonsolvable finite group and $G=H\times K$, then $\Solv_G((x,y))=\Solv_H(x)\times \Solv_K(y)$.
\end{lemma}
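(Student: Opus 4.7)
The plan is to establish both inclusions by exploiting the two natural projection maps $\pi_H \colon H \times K \to H$ and $\pi_K \colon H \times K \to K$, and using the fact that solvability passes to subgroups, quotients, and finite direct products.

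For the inclusion $\Solv_G((x,y)) \subseteq \Solv_H(x) \times \Solv_K(y)$, I would start with an arbitrary element $(a,b) \in \Solv_G((x,y))$, so that the subgroup $N := \langle (x,y), (a,b) \rangle \leq G$ is solvable. Observe that $\pi_H(N) = \langle x, a \rangle$ and $\pi_K(N) = \langle y, b \rangle$, since projections are homomorphisms sending generators to generators. Because homomorphic images of solvable groups are solvable, both $\langle x, a \rangle$ and $\langle y, b \rangle$ are solvable, which means $a \in \Solv_H(x)$ and $b \in \Solv_K(y)$.

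For the reverse inclusion, I would take $a \in \Solv_H(x)$ and $b \in \Solv_K(y)$, so that $\langle x, a \rangle$ and $\langle y, b \rangle$ are each solvable. Their external direct product $\langle x, a \rangle \times \langle y, b \rangle$ is then solvable, since a finite direct product of solvable groups is solvable (one can take the concatenation of derived series, or argue by induction on the length). Since both $(x,y)$ and $(a,b)$ lie in $\langle x, a \rangle \times \langle y, b \rangle$, we have
\[
\langle (x,y), (a,b) \rangle \;\leq\; \langle x, a \rangle \times \langle y, b \rangle,
\]
and every subgroup of a solvable group is solvable. Hence $(a,b) \in \Solv_G((x,y))$.

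There is no real obstacle here: the statement is essentially a bookkeeping fact about how generation interacts with projections in a direct product, combined with the standard closure properties of the class of solvable groups. The only mild subtlety worth noting is that while $\pi_H(N)$ and $\pi_K(N)$ are visibly $\langle x, a \rangle$ and $\langle y, b \rangle$, the subgroup $N$ itself need not equal the full product $\langle x, a \rangle \times \langle y, b \rangle$ (it is a subdirect product); this asymmetry is exactly why the argument uses quotients in one direction and subgroups in the other.
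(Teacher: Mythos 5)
Your proof is correct and follows essentially the same route as the paper's: one inclusion via the solvability of a direct product of solvable groups, and the other via the projection homomorphisms (the paper phrases this direction as a brief contradiction argument, but the underlying reason is exactly your observation that $\pi_H(N)=\langle x,a\rangle$ and $\pi_K(N)=\langle y,b\rangle$ are solvable quotients). Your write-up is in fact more explicit than the paper's about why $\langle x,h\rangle$ and $\langle y,k\rangle$ are solvable.
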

\begin{proof}
    Note that the Cartesian product of two solvable groups is solvable, thus $\Solv_H(x)\times \Solv_K(y)\subseteq\Solv_G((x,y))$. Now if $\Solv_H(x)\times \Solv_K(y)\neq\Solv_G((x,y))$, we can find $(h,k)\in \Solv_G((x,y))\setminus\Solv_H(x)\times \Solv_K(y) $, but then $\left\langle x, h\right\rangle$ and $\left\langle y, k\right\rangle$ are solvable a contradiction.
\end{proof}
\begin{theorem}\label{Car}
    Let $G$ be a nonsolvable finite group and $G=H\times K$.
    \begin{enumerate}
        \item If $K$ is a solvable group, then $\alpha(G)=\alpha(H)$.
         \item If $H$ and $K$ are nonsolvable groups, then $\alpha(G)=\min\{\alpha(H),\alpha(K)\}$.
         
    \end{enumerate}
    
\end{theorem}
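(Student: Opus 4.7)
For part (1), the plan is to reduce immediately to Theorem \ref{quotient}. Since $K$ is solvable, $R(G) = R(H) \times R(K) = R(H) \times K$, which is nontrivial whenever $K \neq 1$; the case $K = 1$ is immediate. Theorem \ref{quotient} then yields $\alpha(G) = \alpha(G/R(G))$, and $G/R(G) \cong H/R(H)$. A second application of the theorem to $H$ (or triviality when $R(H) = 1$) gives $\alpha(H/R(H)) = \alpha(H)$, finishing this part.

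For part (2), I would establish both inequalities using the product formula $\Solv_G((x,y)) = \Solv_H(x) \times \Solv_K(y)$ from the preceding lemma. Without loss of generality $\alpha(H) \leq \alpha(K)$ for the upper bound. Taking a minimum solvabilizer covering $\{x_1, \dots, x_n\}$ of $H$, the set $\{(x_1, 1_K), \dots, (x_n, 1_K)\}$ lies in $G \setminus R(G)$ (because each $x_i \notin R(H)$), and its solvabilizers $\Solv_H(x_i) \times K$ cover all of $G = H \times K$. Hence $\alpha(G) \leq n = \min\{\alpha(H), \alpha(K)\}$.

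For the lower bound, I would start with a minimum covering $\{(x_1, y_1), \dots, (x_m, y_m)\}$ of $G$ and separate indices by location: $I = \{i : x_i \notin R(H)\}$ and $J = \{i : y_i \notin R(K)\}$. The key structural input is $R(G) = R(H) \times R(K)$, from which $I \cup J = \{1, \dots, m\}$. The plan is to argue by contradiction that $\{x_i : i \in I\}$ is a solvabilizer covering of $H$ or $\{y_i : i \in J\}$ is a solvabilizer covering of $K$. If neither does, choose witnesses $h^* \in H$ outside $\bigcup_{i \in I} \Solv_H(x_i)$ and $k^* \in K$ outside $\bigcup_{i \in J} \Solv_K(y_i)$. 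Since $(h^*, k^*)$ lies in some $\Solv_G((x_i, y_i))$, we get $h^* \in \Solv_H(x_i)$ and $k^* \in \Solv_K(y_i)$. The first forces $i \notin I$, hence $x_i \in R(H)$, hence $y_i \notin R(K)$, so $i \in J$; but the second then contradicts the choice of $k^*$. Thus $|I| \geq \alpha(H)$ or $|J| \geq \alpha(K)$, giving $m \geq \min\{\alpha(H), \alpha(K)\}$.

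The main obstacle is the lower bound in part (2). Everything hinges on the identity $R(H \times K) = R(H) \times R(K)$ so that the index partition actually covers $\{1, \dots, m\}$; without this compatibility the contradiction argument collapses. A minor edge case is when $I$ or $J$ is empty, where the witness set becomes $\emptyset$ and any $h^* \in H$ or $k^* \in K$ serves as a witness, but the same chain of deductions still forces a contradiction because $H$ and $K$ are nonempty.
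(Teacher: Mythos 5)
Your proof is correct. For part (1) you take a genuinely different route from the paper: you reduce everything to Theorem \ref{quotient} via $R(H\times K)=R(H)\times K$ and the isomorphism $G/R(G)\cong H/R(H)$, whereas the paper proves the upper bound by the direct covering $\{(x_i,1_K)\}$ and the lower bound by a separate projection-and-contradiction argument (its Case 1). Your reduction is shorter and avoids having to argue about projections of a minimal covering of $G$; the paper's direct argument has the minor advantage of not invoking the lifting of cosets implicit in turning Theorem \ref{quotient} into the equality $\alpha(G)=\alpha(G/R(G))$, but that lifting is immediate, so nothing is lost. For part (2) your argument is essentially the paper's Case 2: your index sets $I$ and $J$ play exactly the role of the paper's $T_H=\pi_1(\bar X)\setminus R(H)$ and $T_K=\pi_2(\bar X)\setminus R(K)$, and your witness pair $(h^*,k^*)$ is the paper's $(x,y)$; if anything, your phrasing is the more careful one, since the paper writes ``let $x\in H\setminus T_H$'' where it means an element outside $\bigcup_{x_i\in T_H}\Solv_H(x_i)$, and your contrapositive organization makes the case $I=\emptyset$ or $J=\emptyset$ fall out automatically rather than being deferred to ``the same argument as in Case 1.'' Both your proof and the paper's rest on the unproved but standard identity $R(H\times K)=R(H)\times R(K)$, which you at least flag explicitly as the key structural input.
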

\begin{proof} 
    Without loss of generality assume $K$ is a solvable group or that $\alpha(K)\ge\alpha(H)$. Let $X=\{ x_1,\dots x_n\}\subseteq H$ a minimal solvablizer covering of $H$. Let  $\Hat{X}=\{ (x_1,1),\dots (x_n,1)\}\subseteq G$, Note that the $\Solv_G((x_i,1))=\Solv_H(x_i)\times K$. Therefore, $$G=\bigcup_{(x_i,1)\in\Hat{X}}\Solv_G((x_i,1)).$$ Hence $\alpha(G)\le\alpha(H)$.
    To prove (2) we will need two cases.
    
    {\bf Case 1:}
    
    Assume that $\alpha(G)<\alpha(H)$ and $K$ is a solvable group. Let $$\Bar{X}=\{ (x_1,y_1),\dots (x_m,y_m)\}\subseteq G $$ be a minimal solvablizer covering of $G$ and $\pi_1$ the projection homomorphism onto $H$ and and $\pi_2$ the projection homomorphism onto $K$. 

       Given that $K$ is a solvable group, we then see that 
    $$H=\bigcup_{x_i\in\pi_1(\Bar{X})}\Solv_H(x_i).$$ A contradiction.

 {\bf Case 2:}
    
    Assume that $H$ and $K$ are nonsolvable and that $\alpha(G)<\alpha(H)\le \alpha(K)$. Let $$\Bar{X}=\{ (x_1,y_1),\dots (x_m,y_m)\}\subseteq G $$ be a minimal solvablizer covering of $G$. 

       We then see that 
    $$H=\bigcup_{x_i\in\pi_1(\Bar{X})}\Solv_H(x_i).$$ To avoid a contradiction  at least one of the $x_i\in R(H)$; note that this implies that $y_i\notin R(K)$.
 Let $$T_H=\pi_1(\Bar{X})\setminus R(H)\text{\ and\ }T_K=\pi_2(\Bar{X})\setminus R(K),$$ Note if one of them is the empty set then the same argument as in case 1 works. Thus we can assume that both are not empty. Let $x\in H\setminus T_H$ and $y\in K\setminus T_K$, clearly $$(x,y)\notin \bigcup_{(x_i,y_i)\in\Bar{X}}\Solv_G((x_i,y_i)).$$ A contradiction.
    \end{proof}

 \begin{definition} An involutionary solvablizer covering of $G$ is a set of involutions $X\subset G \setminus R(G)$, such that $G=\cup_{x\in X}\Solv_G(x)$, and the involutionary solvablizer number of $G$ is the minimum cardinality of all involutionary solvablizer covering of $G$.
	A minimal involutionary solvablizer covering of $G$ is an involutionary solvablizer covering $X\subset G\setminus R(G)$, such that $\mid X\mid=$  the involutionary solvablizer number of $G$. For convenience, we denote the involutionary solvabilizer number of $G$ by $\alpha_{inv}(G)$.
\end{definition} 
\begin{remark}
    If $G$ is a nonsolvable group with no involutionary solvabilizer covering we will say that $\alpha_{inv}(G)=\infty$. From the table in Section~\ref{Appendix} we see that $\PSL{2}{7}$ is the group of smallest order with $\alpha_{inv}(G)=\infty$.
\end{remark}

\begin{theorem}\label{Inv}
    Let $G$ be a nonsolvable finite group and $G=H\times K$ and $H$ has an involutionary solvabilizer covering.
    \begin{enumerate}
        \item If $K$ is a solvable group, then $\alpha_{inv}(G)=\alpha_{inv}(H)$.
         \item If $H$ and $K$ are nonsolvable groups, then $\alpha_{inv}(G)=\min\{\alpha_{inv}(H),\alpha_{inv}(K)\}$.
         
    \end{enumerate}
    
\end{theorem}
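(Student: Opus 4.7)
The plan is to follow the proof of Theorem~\ref{Car} essentially verbatim, relying on one additional observation: if $(x,y) \in H \times K$ is an involution, then $x^2 = 1$ and $y^2 = 1$, so each coordinate is either an involution of its factor or the identity. This lets me both lift involutionary coverings of $H$ to $G$ and project involutionary coverings of $G$ onto $H$ and $K$ while preserving the involution constraint.

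For the upper bound, I would take an involutionary covering $\{x_1,\ldots,x_n\} \subseteq H \setminus R(H)$ of $H$ and form $\hat{X} = \{(x_1,1),\ldots,(x_n,1)\} \subseteq G$. Each $(x_i,1)$ is an involution and lies outside $R(G) = R(H)\times R(K)$ because $x_i \notin R(H)$; the product lemma then gives $\Solv_G((x_i,1)) = \Solv_H(x_i) \times K$, so $\hat{X}$ is an involutionary covering of $G$. In part~(2), the symmetric construction using $K$ (when $K$ also admits an involutionary covering) yields $\alpha_{inv}(G) \le \alpha_{inv}(K)$, and if $K$ has none then $\alpha_{inv}(K) = \infty$ so the minimum in the statement collapses to $\alpha_{inv}(H)$.

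For the lower bound, let $\bar{X} = \{(x_1,y_1),\ldots,(x_m,y_m)\}$ be a minimal involutionary covering of $G$, so every $x_i$ and every $y_i$ is an involution or the identity. In Case~1 ($K$ solvable), $R(G) = R(H)\times K$ forces every $x_i \notin R(H)$, and in particular every $x_i$ is a genuine involution of $H \setminus R(H)$; for any $h \in H$, applying the covering to $(h,1)$ and using the product lemma yields $h \in \Solv_H(x_i)$ for some $i$, producing an involutionary covering of $H$ of size at most $m$. In Case~2 (both factors nonsolvable) I set $T_H = \pi_1(\bar{X}) \setminus R(H)$ and $T_K = \pi_2(\bar{X}) \setminus R(K)$, which are sets of involutions in $H \setminus R(H)$ and $K \setminus R(K)$ respectively. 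If $T_H = \emptyset$ then every $x_i \in R(H)$, hence every $y_i \in K \setminus R(K)$, and the projection argument yields an involutionary covering of $K$ of size at most $m$; the case $T_K = \emptyset$ is symmetric. If both are nonempty and one assumes $m < \min\{\alpha_{inv}(H), \alpha_{inv}(K)\}$ for contradiction, then $|T_H| \le m < \alpha_{inv}(H)$ leaves some $x \in H$ uncovered by $\bigcup_{x_i \in T_H} \Solv_H(x_i)$, and symmetrically some $y \in K$; one then checks that $(x,y)$ evades every $\Solv_G((x_i,y_i))$, since for each $i$ either $x_i \in T_H$ with $x \notin \Solv_H(x_i)$, or $x_i \in R(H)$ which forces $y_i \in T_K$ with $y \notin \Solv_K(y_i)$.

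The main obstacle is essentially bookkeeping: tracking which projected coordinates lie in the solvable radicals of their factors and ensuring that what remains after removing those is still large enough to cover a full factor. The only genuinely new ingredient beyond Theorem~\ref{Car} is the one-line observation that $(x,1)$ is an involution whenever $x$ is, together with the standing hypothesis that $H$ has an involutionary covering, which is what keeps the upper bound finite.
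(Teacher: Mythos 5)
Your proposal is correct and follows essentially the same route as the paper, which simply runs the proof of Theorem~\ref{Car} again after observing that each coordinate of an involution in $H\times K$ is an involution or the identity (and that the coordinate landing outside the relevant solvable radical is necessarily a genuine involution). You have merely written out the bookkeeping that the paper leaves implicit, including the correct handling of the case $\alpha_{inv}(K)=\infty$.
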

\begin{proof}
The proof is basically identical to the proof of Theorem~\ref{Car}, just replace the $(x_i, y_i)$s  by involutions and note that for each $i$ that  one of the $x_i$ or $y_i$ is an involution and the other is either an involution or the identity.
\end{proof}

\begin{remark} If $G=H\times K$ where $\infty>\alpha_{inv}(H)>\alpha(K)$ and $\alpha_{inv}(K)=\infty$, then $\alpha(G)<\alpha_{inv}(G)<\infty$. For example when $G=\PSL{2}{7}\times \PSL{2}{9}$ we see that $\alpha(G)=5<\alpha_{inv}(G)=9$. This agrees with \cite{GAP4}.
\end{remark}

\begin{theorem}\label{TW}
    If $G=H\wr K\le H\wr S_n$ where $H$ is a nonsolvable group and $K\le S_n$ acting on $B=H^n$, then $\alpha(G)\le\alpha(H)$.\end{theorem}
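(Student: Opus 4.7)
The plan is to fix a minimal solvabilizer covering $X = \{x_1, \ldots, x_m\}$ of $H$, so $m = \alpha(H)$, and lift it by setting $\hat x_i := (x_i, 1, \ldots, 1; 1_K) \in B \le G$, the element with $x_i$ in the first coordinate of the base group and trivial $K$-component. To see $\hat x_i \in G\setminus R(G)$, observe that $R(G)\cap B$ is a $K$-invariant solvable normal subgroup of $B = H^n$, hence contained in $R(H)^n$; since $x_i \notin R(H)$ we have $\hat x_i \notin R(H)^n$, and so $\hat x_i \notin R(G)$.

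For the covering property, fix an arbitrary $g = (b_1, \ldots, b_n; k) \in G$ and put $M := \langle \hat x_i, g\rangle$. The image of $M$ in $G/B \cong K$ is the cyclic group $\langle k \rangle$, so $M$ is solvable if and only if $M \cap B$ is. Reidemeister--Schreier with the transversal $\{1, g, g^2, \ldots, g^{|k|-1}\}$ shows that $M\cap B$ is generated by the conjugates $g^\ell \hat x_i g^{-\ell}$ for $0 \le \ell < |k|$ together with $g^{|k|}$. Setting
\[
c^{(\ell)}_j := b_j\, b_{k^{-1}(j)}\cdots b_{k^{-(\ell-1)}(j)} \in H,
\]
a direct wreath-product computation shows that $g^\ell \hat x_i g^{-\ell}$ has only one non-identity entry, namely $c^{(\ell)}_{k^\ell(1)}\, x_i\, (c^{(\ell)}_{k^\ell(1)})^{-1}$ in position $k^\ell(1)$; these conjugates are therefore supported only on the $k$-orbit of $1$.

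The key computational step is an orbit-product identity. Let $r_1$ be the length of the $k$-orbit of $1$, and set $y := c^{(r_1)}_1 = b_1 b_{k^{-1}(1)} \cdots b_{k^{-(r_1-1)}(1)}$. For each $\ell_0 \in \{0, 1, \ldots, r_1 - 1\}$ write $j_{\ell_0} := k^{\ell_0}(1)$ and $z_{\ell_0} := b_{j_{\ell_0}} b_{j_{\ell_0 - 1}} \cdots b_{j_1}$ (with $z_0 := 1$). A short induction using the recursion $c^{(\ell+1)}_j = c^{(\ell)}_j\, b_{k^{-\ell}(j)}$ yields both $c^{(\ell_0 + s r_1)}_{j_{\ell_0}} = z_{\ell_0} y^s$ and $c^{(|k|)}_{j_{\ell_0}} = z_{\ell_0}\, y^{|k|/r_1}\, z_{\ell_0}^{-1}$. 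Consequently, the projection of $M \cap B$ to coordinate $j_{\ell_0}$ lies inside $z_{\ell_0} \langle x_i, y\rangle z_{\ell_0}^{-1}$, while the projection to any coordinate outside the $k$-orbit of $1$ is cyclic. Embedding $M \cap B$ into the direct product of its coordinate projections then shows that $M \cap B$, and hence $M$, is solvable whenever $\langle x_i, y\rangle$ is solvable in $H$.

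Since $X$ covers $H$, some $x_i \in X$ satisfies $\langle x_i, y\rangle$ solvable, making $\langle \hat x_i, g\rangle$ solvable. Hence $\hat X := \{\hat x_1, \ldots, \hat x_m\}$ is a solvabilizer covering of $G$, giving $\alpha(G) \le m = \alpha(H)$. The main obstacle is the orbit-product identity $c^{(\ell_0 + s r_1)}_{j_{\ell_0}} = z_{\ell_0} y^s$; it is what reduces the solvability of $\langle \hat x_i, g\rangle$ in $G$ to the solvability of a single two-generator subgroup $\langle x_i, y\rangle$ in $H$, and crucially avoids any requirement that $K$ be solvable.
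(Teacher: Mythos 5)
Your argument is correct, and it takes a genuinely different route from the paper's. The paper lifts each $x_i$ to the \emph{diagonal} element $(x_i,\dots,x_i;1_K)$ and asserts that the sets $\bar T_i=\bigcup_{k\in K}\bigl(\Solv_H(x_i)^n\times\{k\}\bigr)$ both lie in $\Solv_G\bigl((x_i,\dots,x_i;1_K)\bigr)$ and cover $G$. Both assertions are delicate: the sets $\Solv_H(x_1)^n,\dots,\Solv_H(x_m)^n$ need not cover $H^n$, since different coordinates of $g$ may require different $x_i$'s; and even when $b_1,\dots,b_n\in\Solv_H(x_i)$, the subgroup generated by the diagonal and $(b_1,\dots,b_n;k)$ contains, in each coordinate, the orbit products of the $b_j$'s, which $x_i$ need not solvabilize (already in $A_5$ two involutions $y_1,y_2\in\Solv_{A_5}(x)$ can have $\langle x,y_1y_2\rangle=A_5$). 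Your single-coordinate lift $\hat x_i=(x_i,1,\dots,1;1_K)$ sidesteps both obstacles: the Schreier generators of $\langle\hat x_i,g\rangle\cap B$ are supported on a single $k$-orbit, the orbit-product identity places every coordinate projection inside a conjugate of the one two-generator subgroup $\langle x_i,y\rangle$ with $y=b_1b_{k^{-1}(1)}\cdots b_{k^{-(r_1-1)}(1)}$, and the covering of $H$ is then applied to that single element $y$. I verified the conjugation formula, the identities $c^{(\ell_0+sr_1)}_{j_{\ell_0}}=z_{\ell_0}y^{s}$ and $c^{(|k|)}_{j_{\ell_0}}=z_{\ell_0}y^{|k|/r_1}z_{\ell_0}^{-1}$, the embedding of $M\cap B$ into the product of its coordinate projections, and the check that $\hat x_i\notin R(G)$; all are sound. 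Your proof is longer than the paper's sketch, but it is the version that actually closes the argument, and it makes explicit that no hypothesis on $K$ beyond $K\le S_n$ is needed.
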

    \begin{proof}
        Let $X=\{x_1,\dots x_j\}\subseteq H$ be a minimal solvablizer covering of $H$ and let $T_i=\Solv_H(x_i)^n\subseteq B$. Now let
        $(T_i,k)=\{(y,k)\ \mid\ y\in T_i\}$ for any $k\in K$, and $$\bar{T_i}=\cup_{k\in K} (T_i,k).$$
        Note that $\bar{T_i}\subseteq\Solv_G((x_i,\dots, x_i,1_K))$ and that $\cup_{1\le i\le j}\bar{T_i}=G$. Therefore $\alpha(G)\le \alpha(H)$
    \end{proof}
\begin{remark}
    Note that according to Theorem~\ref{Car} $\alpha(B=H^n)=\alpha(H)$.
    Thus, we can rephrase Theorem~\ref{TW} as follows:

    If $G=H\wr K\le H\wr S_n$ where $H$ is a nonsolvable group and $K\le S_n$ acting on $B=H^n$, then $\alpha(G)\le\alpha(B)$.
    \end{remark}
\begin{corollary}
    If $G=A_5\wr K$ where $K\le S_n$ acts on $B=A_5^n$ as a $n$-cycle, then $\alpha(G)=3$.
\end{corollary}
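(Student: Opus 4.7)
The plan is to show $\alpha(G) = 3$ by sandwiching the value between matching upper and lower bounds, both of which follow from results established earlier in the paper.

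For the upper bound, I would invoke Theorem~\ref{TW} directly. The wreath product $G = A_5 \wr K$ satisfies exactly the hypotheses of that theorem with $H = A_5$, so $\alpha(G) \le \alpha(A_5)$. Since the paper has already established that $\alpha(A_5) = 3$, this yields $\alpha(G) \le 3$.

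For the lower bound, I would observe that $G$ is nonsolvable: indeed, the base group $B = A_5^n$ sits inside $G$ as a normal subgroup and is nonsolvable (since it has $A_5$ as a quotient). Therefore Corollary~\ref{alphag2} applies and gives $\alpha(G) > 2$, i.e., $\alpha(G) \ge 3$.

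Combining the two inequalities yields $\alpha(G) = 3$, finishing the proof. There is essentially no obstacle here, since all the heavy lifting has already been done: Theorem~\ref{TW} handles the wreath-product upper bound, the solvabilizer number of $A_5$ has been computed, and Corollary~\ref{alphag2} provides the universal lower bound of $3$ for any nonsolvable group. The only thing that even requires a sentence is confirming nonsolvability of $G$, which is immediate from the existence of the nonsolvable normal subgroup $B$.
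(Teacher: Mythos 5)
Your proposal matches the paper's proof exactly: the upper bound $\alpha(G)\le\alpha(A_5)=3$ comes from Theorem~\ref{TW}, and the lower bound $\alpha(G)>2$ comes from Corollary~\ref{alphag2}. Both the argument and the cited results are the same, so there is nothing further to add.
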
   
\begin{proof}
    By Theorem~\ref{TW}, $\alpha(G)\le\alpha(A_5)=3$ and by Corollary~\ref{alphag2} $\alpha(G)>2$. Thus  $\alpha(G)=3$.
\end{proof}

\section{Minimal simple groups and PSL} \label{minimal simple}

In this section, we will state and provide proofs for several theorems regarding the solvabilizer number of all minimal simple groups and certain projective special linear groups.

\begin{remark}\label{Solminimalsimple}
	If $G$ is a minimal simple group,  $x\in G\setminus\{1\}$  and $\mathscr{M}_x=\{ M<G\mid x\in M\ \text{and}\  M \ \text{is maximal in}\ G\}$, then $\Solv_G(x)=\cup_{M\in\mathscr{M}_x}M$
\end{remark}

Thompson \cite[Corollary 1]{Thompson} determined all minimal simple groups. Every minimal simple group is isomorphic to one of the following: 
\begin{itemize} 
\item[{\rm (a)}]  $\PSL{2}{2^p}$ where $p$ is any prime; 
\item[{\rm (b)}]  $\PSL{2}{3^p}$ where $p$ is an odd prime; 
\item[{\rm (c)}]  $\PSL{2}{p}$ where $p > 3$ is a prime satisfying $p \equiv 2,3 \mod 5$; 
\item[{\rm (d)}]  ${\rm Sz}(2^p)$, where $p$ is an odd prime; and \item[{\rm (e)}] $\PSL{3}{3}$.
\end{itemize}
\begin{remark}
By utilizing Dickson's classification of maximal subgroups in $\PSL{2}{q}$ (\cite{Dickson}) and referring to Suzuki's paper (\cite{Suzuki}), we can discern all distinct maximal subgroups of $G$ while excluding $\PSL{3}{3}$. Moreover, the maximal subgroups of $\PSL{3}{3}$ can be identified using \cite{GAP4}.

Note that if  $G$ is one of the minimal simple groups $\PSL{2}{2^p}$ and ${\rm Sz}(2^p)$,then all maximal subgroups of $G$ have even order. Let $\mathscr{X}=\{\text{all involutions in } G\}$  for these cases, and we observe that $G=\cup_{x\in\mathscr{X}}\Solv_G(x)$.

If $G$ is one of the minimal simple groups $\PSL{2}{3^p}$, $\PSL{2}{p}$, or $\PSL{3}{3}$, then $G=\cup_{x\in\mathscr{X}\cup \mathscr{Y}}\Solv_G(x)$ where $\mathscr{Y}$ is the set of all elements of order $r$ in $G$, with $r=3$ in $G=\PSL{2}{3^p}$ or $\PSL{3}{3}$, and $r=p$ in $G=\PSL{2}{p}$.
\end{remark}
\begin{remark}
In the work by the authors in \cite{LM}, the following results have been established:
\begin{itemize}
    \item  If $G=L_2(q)$ and $q>9$ is a power of an odd prime, then $$\mu(G)=\sigma(G)=q(q+1)/2+1,$$ 
    \item if  $G=L_2(q)$ and $q\geq 4$ is an even prime power, then $$\mu(G)=[(q^2+1)/2]<\sigma(G)=q(q+1)/2.$$
    
    \item They also showed that for $G={\rm Sz}(q)$, $$\mu(G)=q^4/2<\sigma(G)=q^2(q^2+1)/2.$$ 
    \end{itemize}
    Leveraging these established findings, we can derive an upper bound for the solvabilizer number of minimal simple groups.
\end{remark}
Now, we inquire about the accuracy of these upper bounds. Specifically, is there a minimal simple group $G$ for which $\alpha(G)=\mu(G)$?

\begin{theorem}\label{PSL(2, 2^f)}
The minimal simple group $\PSL{2}{q}$, where $q=2^p$ with $p$ as a prime, can be covered by the solvabilizers of $q-1$ involutions. In particular, $\alpha_{inv}(\PSL{2}{q})=\alpha(\PSL{2}{q})=q-1$.
\end{theorem}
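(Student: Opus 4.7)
My plan combines Dickson's classification of maximal subgroups with the formula $\Solv_G(x) = \bigcup_{M \in \mathscr{M}_x} M$ from Remark~\ref{Solminimalsimple}. For $G = \PSL{2}{q}$ with $q = 2^p$, every proper maximal subgroup is either a Borel of order $q(q-1)$ (of which there are $q+1$), a \emph{split} dihedral $D_{2(q-1)}$ (of which there are $q(q+1)/2$), or a \emph{non-split} dihedral $D_{2(q+1)}$ (of which there are $q(q-1)/2$). The Sylow $2$-subgroup is elementary abelian of order $q$, so each Borel contains exactly $q-1$ involutions, and each involution of $G$ belongs to a unique Sylow $2$-subgroup.

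For the upper bound, I would fix a Sylow $2$-subgroup $P$ and take $X = P \setminus \{1\}$, so $|X| = q-1$. To see that $X$ is an involutionary solvabilizer covering, I would case-split on $g \in G$. If $g \in B := N_G(P)$, then $\langle x, g\rangle \leq B$ is solvable for every $x \in X$. If $g$ is any involution, $\langle x, g\rangle$ is dihedral, hence solvable. Otherwise $g$ is semisimple of odd order $>1$, lying in a unique maximal dihedral $D = N_G(T)$; I would verify that $P \cap D$ contains an involution $x$, so that $\langle x, g\rangle \leq D$ is solvable. The key fact is: $\gcd(|P|, |D|) = \gcd(q, 2(q\pm 1)) = 2$, so $|P \cap D| \in \{1,2\}$; moreover two distinct involutions of $D$ have product in the odd-order torus $T$, so they lie in distinct Sylows. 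Pigeonholing the $q+1$ (resp.\ $q-1$) involutions of a non-split (resp.\ split) $D$ against the $q+1$ Sylows of $G$, I would conclude that every Sylow meets every non-split $D$ in an involution, and every Sylow meets every split $D$ in an involution unless the torus $T$ of $D$ lies in the Borel of that Sylow---in which case $g \in T \subseteq B$ is already handled by the first case.

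For the lower bound, I would single out the $q(q-1)/2$ non-split dihedrals as the bottleneck. The only elements of $G$ in any $D_{2(q+1)}$ are involutions (each in exactly $q/2$ such dihedrals, by double counting) and non-split semisimple elements (each in exactly $1$). If $X$ is a solvabilizer covering with $I$ involutions and $S$ non-split semisimple elements, then covering every non-split dihedral forces
\[
\tfrac{q}{2}\, I + S \;\geq\; \tfrac{q(q-1)}{2}.
\]
Since $S \leq (q/2)S$ for $q \geq 2$, this yields $|X| \geq I + S \geq q - 1$. Combined with $\alpha(G) \leq \alpha_{inv}(G) \leq q - 1$ from the construction, the conclusion $\alpha(G) = \alpha_{inv}(G) = q - 1$ follows.

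The main obstacle I expect is the pigeonhole step in the upper bound: one must carefully verify that for every non-split torus $T$ and every Sylow $P$, the intersection $P \cap N_G(T)$ contains an involution, and identify exactly which split tori give trivial intersection with $P$. Once the involution-versus-Sylow double counting is set up cleanly, the case analysis and the lower bound counting are then routine.
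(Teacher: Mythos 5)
Your proposal is correct, and it rests on the same structural inputs as the paper's proof --- Dickson's list of maximal subgroups of $\operatorname{PSL}_2(2^p)$, the trivial intersection of the Sylow $2$-subgroups, and the incidence counts showing that each involution lies in $q/2$ of the $q(q-1)/2$ subgroups $D_{2(q+1)}$ while each nontrivial element of a non-split torus lies in exactly one --- but you assemble them differently at both ends, and in each case more cleanly. For the upper bound the paper builds its $q-1$ involutions greedily: it picks an involution, removes the $q/2$ dihedral groups $D_{2(q+1)}$ through it, iterates to obtain a partition, and then needs a separate and rather delicate argument that the leftover split dihedral groups and the two remaining Borel subgroups are also swept up. Your choice $X=P\setminus\{1\}$ for a single Sylow $2$-subgroup $P$ is an explicit realization of that partition (each $D_{2(q+1)}$ meets $P$ in exactly one involution, by your pigeonhole), and it converts the verification into a transparent three-way case split on $g$. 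The step you flag does need to be carried out, but it closes: for a split torus $T$ with $T\le N_G(P)$, no involution $x\in P$ can invert $T$, since $N_G(P)/P$ is abelian forces $[x,t]\in P$ for $t\in T$, so inversion would give $t^{2}\in P\cap T=1$ and hence $T=1$; thus $P\cap N_G(T)=1$ exactly for the two Sylows whose Borels contain $T$, matching the two Sylows missed by the $q-1$ involutions of $N_G(T)$, and in that case $g\in T\le N_G(P)$ falls under your first case. For the lower bound, your weighted inequality $(q/2)I+S\ge q(q-1)/2$, whence $I+S\ge q-1$, is tighter bookkeeping than the paper's, which estimates pure-involution and pure-semisimple covers of the $D_{2(q+1)}$'s separately and leaves mixed covers implicit; your version handles them uniformly.
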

\begin{proof}
Firstly, we observe that, according to the well-known theorem by Dickson (\cite{Huppert}), all elements of $G=\PSL{2}{2^p}$ are of order either $2$, a factor of $2^p-1$ or $2^p+1$.

Utilizing Dickson's result (\cite{Dickson}), we can determine that every maximal subgroup of $\PSL{2}{2^p}$ is isomorphic to one of the following: $D_{2(q-1)}$, $D_{2(q+1)}$, and ${C_2}^p\rtimes C_{q-1}$, where $q=2^p$.

It is evident from \cite[Theorem 2.1]{King} that there exists a single class of $q+1$ conjugate abelian groups of order $q$, a single class of $q(q+1)/2$ dihedral groups $D_{2(q-1)}$, and a single class of $q(q-1)/2$ dihedral groups $D_{2(q+1)}$.
We note that $G$ possesses $q+1$ conjugate maximal subgroups of the form ${C_2}^p\rtimes C_{q-1}$, which contain all abelian Sylow $2$-subgroups of $G$. As all Sylow $2$-subgroups of $G$ intersect trivially and each of them contains $q-1$ involutions, we can find that $G$ has $q^2-1$ involutions belonging to the subgroups ${C_2}^p\rtimes C_{q-1}$. 

Note that all the $q^2-1$ involutions  are conjugate in $G$. If $x$ is an involution in a maximal subgroup $M\le G$, then the number of subgroups isomorphic to $M$ that contain $x$ is the number of conjugates of $x$ in $M$ multiplied by the number of conjugates of $M$ in $G$ divided by $q^2-1$. 

We can use the fact above and the fact that each maximal subgroup $D_{2(q+1)}$  (respectively, $D_{2(q-1)}$) has $q+1$ (respectively, $q-1$) involutions to show that every involution belongs to $\frac{(q+1){q(q-1)/2}}{(q^2-1)}=q/2$ dihedral groups $D_{2(q+1)}$ and $\frac{(q-1) {q(q+1)/2}}{(q^2-1)}=q/2$ dihedral groups $D_{2(q-1)}$ and one ${C_2}^p\rtimes C_{q-1}$. 

It is seen from \cite[Theorem 2.1]{King} that there is a single class of $q(q-1)/2$ (respectively, $q(q+1)/2$) conjugate cyclic groups of order $d$ for each divisor $d$ of $q+1$ (respectively, $q-1$). So it is easily seen that every two maximal dihedral groups $D_{2(q+1)}$ (respectively, $D_{2(q-1)}$) intersect at one involution if they intersect non trivially. If any two maximal subgroups ${C_2}^p\rtimes C_{q-1}$ intersect non trivially, then their intersection will be $C_{q-1}$ since any involution belongs to exactly one such maximal subgroup. 

We will show that the set of all maximal dihedral groups $D_{2(q+1)}$ can be partitioned into $q-1$ disjoint subsets which each of them contains $q/2$ maximal dihedral groups $D_{2(q+1)}$. Given an involution $x$, we pick all $q/2$ maximal subgroups $D_{2(q+1)}$ containing $x$ as the first subset of the partition whose members intersect at $x$. Now, let $y$  be an involution distinct from $x$. We can take the set of all $q/2$ maximal dihedral groups $D_{2(q+1)}$ containing $y$ as the second subset of the partition. We continue this process to find all $q-1$ disjoint subsets of the set of all maximal dihedral groups $D_{2(q+1)}$ to partition this set. These $q-1$ involutions cover all maximal dihedral groups $D_{2(q+1)}$. By a similar argument, it can be shown that the subset of all maximal dihedral groups $D_{2(q-1)}$ which contains $q(q-1)/2$ maximal groups $D_{2(q-1)}$ can be patitioned based on these $q-1$ involutions. So only $q$ maximal dihedral groups $D_{2(q-1)}$ are left to be covered that can be partitioned into two subsets which each contains $q/2$ maximal dihedral group $D_{2(q-1)}$. On the other hand, these $q-1$ involutions can cover $q-1$ maximal subgroups ${C_2}^p\rtimes C_{q-1}$ as well. Thus two maximal subgroups ${C_2}^p\rtimes C_{q-1}$ are left to be covered. 

In what follows, we demonstrate that the remaining maximal subgroups will be also covered by the $q-1$ involutions that have been already chosen. Note that these two maximal subgroups ${C_2}^p\rtimes C_{q-1}$ and $q$ maximal dihedral groups $D_{2(q-1)}$ only contain the involutions and elements of order $d$ as divisors of $q-1$. We can use Remark \ref{solofinvolution} to show that the involutions can be covered by the solvabilizer of any involution of $G$. So we only need to cover the order $d$ as divisors of $q-1$ elements of these maximal subgroups.

On the other hand, if a maximal subgroup $D_{2(q-1)}$ and ${C_2}^p\rtimes C_{q-1}$ intersect non trivially,  their intersection can contain the involutions and a single cyclic subgroup $C_{q-1}$. 
Considering \cite[Theorem 2.1]{King}, there are $q(q+1)/2$ cyclic subgroup $C_{q-1}$ which each of them is contained in exactly one maximal subgroup $D_{2(q-1)}$ and $\frac{q(q+1)}{q(q+1)/2}=2$  maximal subgroups ${C_2}^p\rtimes C_{q-1}$. Each of the two maximal subgroups ${C_2}^p\rtimes C_{q-1}$ must intersects one of the maximal subgroups $D_{2(q-1)}$ in the partition we found in the previous step and this intersection contains a cyclic subgroup $C_{q-1}$ since otherwise, the set consisting of the two maximal subgroups ${C_2}^p\rtimes C_{q-1}$ and $q$ maximal dihedral groups $D_{2(q-1)}$ is disjoint with the set of $q-1$ maximal subgroups ${C_2}^p\rtimes C_{q-1}$ and $q(q-1)/2$ dihedral groups $D_{2(q-1)}$ that we constructed in the previous step which is impossible.

Finally, we assert that the minimal solvabilizer covering of $G$ comprises these $q-1$ involutions. We have demonstrated that to cover the maximal subgroups $D_{2(q+1)}$, $q-1$ involutions are necessary. Notably, all maximal subgroups $D_{2(q+1)}$ can be only covered by the solvabilizers of involutions and/or solvabilizers of elements of order divisors of $q+1$, which, by Corollary ~\ref{primes}, can be assumed to be prime. In fact, considering an element $x$ of order $d$ as a prime divisor of $q+1$, we can see from Remark \ref{Solminimalsimple} that
$\Solv_G(x)=D_{2(q+1)}$ as the cyclic subgroup generated by $x$ is contained in precisely one maximal subgroup $D_{2(q+1)}$. Hence, at least $q(q-1)/2$ elements of a prime divisor of $q+1$ order are required to cover all maximal dihedral groups $D_{2(q+1)}$. This concludes the proof.
\end{proof}
\begin{theorem}\label{PSL(2, q)}
Let $G=\PSL{2}{p}$ be a minimal simple group, where $p>3$ is a prime satisfying $p \equiv 2,3 \mod 5$. Then $$\alpha(G)\geq\left \{\begin{array}{lll}
p, &  & p \equiv 1 \mod 4 \\[0.1cm]
(3p-1)/2, & & p \equiv 3 \mod 4 \end{array} \right. $$
\end{theorem}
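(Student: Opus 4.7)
The plan is to exploit Dickson's classification of maximal subgroups of $G = \PSL{2}{p}$ combined with Remark~\ref{Solminimalsimple}, which gives $\Solv_G(x) = \bigcup_{M \in \mathscr{M}_x} M$. Since $p \equiv 2, 3 \pmod 5$ rules out an $A_5$ subgroup, for $p$ large enough the maximal subgroups are the Borel $B \cong C_p \rtimes C_{(p-1)/2}$ (of which there are $p + 1$) and the dihedrals $D_{p-1}$ and $D_{p+1}$ (of which there are $p(p+1)/2$ and $p(p-1)/2$ respectively). The key constraints on a solvabilizer covering $X$ come from elements with a unique containing maximal: an element of order $p$ lies in a unique Borel, and an element of odd prime order $r \mid (p+1)/2$ lies in a unique $D_{p+1}$. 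Hence $X$ must meet every Borel and every $D_{p+1}$.

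Next I would double-count incidences. A single involution lies in $(p-1)/2$ dihedrals $D_{p+1}$ when $p \equiv 1 \pmod 4$ and in $(p+3)/2$ when $p \equiv 3 \pmod 4$; moreover, involutions lie in $2$ Borels when $p \equiv 1 \pmod 4$ (since $C_{(p-1)/2}$ has even order) but in $0$ Borels when $p \equiv 3 \pmod 4$ (since $C_{(p-1)/2}$ has odd order). Other prime-order elements contribute minimally: an element of order $p$ meets exactly $1$ Borel; an element of order $r \mid (p+1)/2$ meets exactly $1$ dihedral $D_{p+1}$; an element of order $r \mid (p-1)/2$ meets $2$ Borels and $1$ dihedral $D_{p-1}$. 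For $p \equiv 1 \pmod 4$, the maximum number of $D_{p+1}$'s containing any single element of $G$ is $(p-1)/2$, so $|X| \geq (p(p-1)/2)/((p-1)/2) = p$.

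For $p \equiv 3 \pmod 4$, the Borel-covering elements (order $p$ or dividing $(p-1)/2$) are disjoint from the $D_{p+1}$-covering elements (involutions or order dividing $(p+1)/2$), so we may split $X = X_B \sqcup X_D$. Each Borel-covering element meets at most $2$ Borels, giving $|X_B| \geq (p+1)/2$. For the $D_{p+1}$-cover, the refined integer bound $|X_D| \geq p - 1$ must come from a more careful combinatorial argument, exploiting that two distinct involutions share at most $3$ common $D_{p+1}$'s (namely $3$ when they commute and generate a Klein four-group, $1$ when their product has order $>2$ dividing $(p+1)/2$, and $0$ otherwise). Summing then yields $|X| \geq (p+1)/2 + (p - 1) = (3p-1)/2$. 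The main obstacle is precisely this refined bound $|X_D| \geq p - 1$; the naive fractional bound $p(p-1)/(p+3)$ is too weak, so I would attempt either a hypergraph-covering argument using the overlap structure above, or an analysis of the $G$-action on the non-split tori to show that any hitting set of $D_{p+1}$'s has size at least $p - 1$.
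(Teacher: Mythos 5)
Your $p \equiv 1 \pmod 4$ case matches the paper's argument: only involutions and elements of odd prime order dividing $(p+1)/2$ lie in any maximal $D_{p+1}$, the former in exactly $(p-1)/2$ of them and the latter in exactly one, so covering the $p(p-1)/2$ subgroups $D_{p+1}$ forces $|X|\geq p$. That part is fine.

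The $p \equiv 3 \pmod 4$ case has a genuine gap, and you have located it yourself: the bound $|X_D|\geq p-1$ for hitting all maximal $D_{p+1}$'s is not proved. The fractional bound gives only $p(p-1)/(p+3)\approx p-4$, and the overlap data you cite (two involutions lie in at most $3$ common $D_{p+1}$'s) does not close the gap: a Bonferroni count with $k\approx p$ involutions subtracts roughly $3\binom{k}{2}\sim 3p^2/2$, which swamps the $p(p-1)/2$ subgroups to be covered, so inclusion--exclusion runs the wrong way. The paper sidesteps this by running the second count on the \emph{other} dihedral family, the $p(p+1)/2$ maximal subgroups $D_{p-1}$: for $p\equiv 3\pmod 4$ each involution lies in exactly $(p+1)/2$ of these, each element of odd prime order dividing $(p-1)/2$ lies in exactly one, and order-$p$ elements lie in none. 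Writing $a,b,c$ for the numbers of involutions, elements of order dividing $(p-1)/2$, and elements of order $p$ in $X$, the covering conditions give $a(p+1)/2+b\geq p(p+1)/2$ (the $D_{p-1}$'s) and $2b+c\geq p+1$ (the Borels), and minimizing $a+b+c$ over these constraints yields exactly $(3p-1)/2$, attained at $b=(p+1)/2$, $a=p-1$, $c=0$. The trade-off versus your plan: elements of order dividing $(p-1)/2$ now serve both families, so you lose your clean disjoint split $X=X_B\sqcup X_D$, but you gain exact incidence numbers and a two-constraint bound that actually reaches $(3p-1)/2$. I recommend replacing your $D_{p+1}$ count by this $D_{p-1}$ count. (One further remark: your hedge ``for $p$ large enough'' is warranted --- for $p=7$ the dihedral subgroups are not maximal, the table reports $\alpha(\PSL{2}{7})=5<10$, and the stated inequality fails; the paper's proof silently assumes maximality of $D_{p\pm1}$ as well.)
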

\begin{proof}
According to \cite[Theorem 2.2]{Giudici}, the maximal subgroups of $G$, up to isomorphism, include: $D_{p-1}$, $D_{p+1}$, ${C_p}\rtimes C_{(p-1)/2}$, and either $A_4$ or $S_4$. We will consider the cases where $(p-1)/2$ is divisible by $2$ or not separately, resulting in two possibilities: when $p \equiv 1 \mod 4$ and $p \equiv 3 \mod 4$.

First let $p \equiv 1 \mod 4$. As we can see from \cite[Theorem 2.1]{King}, $G$ has $p(p+1)/2$ subgroup of order $2$ which follows there are $p(p+1)/2$ involutions in $G$. In the subsequent analysis, we demonstrate that to cover all maximal subgroups $D_{p+1}$, $p$ involutions are required. Considering \cite[Theorem 2.1]{King} there exist a single class of $p(p-1)/2$ conjugate cyclic groups of order $d$ for each divisor $d$ of $(p+1)/2$ which implies that every element $x$ of order $d$ belongs to one maximal subgroup $D_{p+1}$; so, it follows that $\Solv_G(x)=D_{p+1}$. Thus, the maximal subgroups $D_{p+1}$ can only be covered by the solvabilizers of involutions or elements of order a prime divisor of $(p+1)/2$. If the elements of order a prime divisor of $(p+1)/2$ are taken to cover the maximal subgroups $D_{p+1}$, $p(p-1)/2$ elements are needed. In the sequel, we will show that the solvabilizer of $p$ involutions can cover the maximal subgroups $D_{p+1}$ which is obviously less than $p(p-1)/2$. It is easy to see that each involution belongs to $(p-1)/2$ maximal subgroups $D_{p+1}$. By a similar argument in Theorem \ref{PSL(2, 2^f)}, we can partition the set of all maximal subgroups $D_{p+1}$ into $p$ disjoint subsets because the intersection of any two maximal subgroups $D_{p+1}$ can only contain an involution as a nontrivial element. So we can cover all maximal subgroups $D_{p+1}$ by the solvabilizer of $p$ involutions.

Assume next that $p \equiv 3 \mod 4$. All $p+1$ maximal subgroups of the form ${C_p}\rtimes C_{(p-1)/2}$ contain all $p+1$ abelian Sylow $p$-subgroups of $G$. Each of the $p^2-1$ elements $x$ of order $p$ belongs to one maximal subgroup ${C_p}\rtimes C_{(p-1)/2}$; thus, it follows that $\Solv_G(x)={C_p}\rtimes C_{(p-1)/2}$. Therefore, these maximal subgroups can only be covered by the solvabilizers of elements of order $p$ or a prime divisor of $(p-1)/2$. If we opt for elements of order $p$ to cover the maximal subgroups ${C_p}\rtimes C_{(p-1)/2}$, we require $p+1$ elements to cover all of them. In the subsequent discussion, we aim to cover them using $(p+1)/2$ elements of order prime divisors of $(p-1)/2$. Taking into account \cite[Theorem 2.1]{King}, it is evident that $G$ possesses a unique class of $p(p+1)/2$ conjugate cyclic groups of order $d$ for each divisor $d$ of $(p-1)/2$. Hence, in total, there are $p(p+1)(r-1)/2$ elements of order $r$ for $r$ as a prime divisor of $(p-1)/2$ in $G$. On the other hand, since each maximal subgroup ${C_p}\rtimes C_{(p-1)/2}$ has $p(r-1)$ elements of order $r$, all these subgroups contain $p(r-1)(p+1)$ elements of order $r$. It follows that any element of order $r$ belongs to two maximal subgroup ${C_p}\rtimes C_{(p-1)/2}$. Consequently, all these subgroups can be covered by $(p+1)/2$ elements of prime divisor order $(p-1)/2$. We will show that to cover the maximal subgroups $D_{p-1}$, at least $p-1$ involutions are needed. Given that the total number of involutions is $p(p-1)/2$ (refer to \cite[Theorem 2.1]{King}), it is evident that each involution belongs to $(p+1)/2$ maximal subgroups $D_{p-1}$. As $2$ does not divide $(q-1)/2$, it is easy to see that the intersection of any two maximal subgroups $D_{p-1}$ can only contain one involution as a nontrivial element. So we can partition the set of all maximal subgroups $D_{p-1}$ into $p$ disjoint subsets such that the maximal subgroups in a certain subsets intersect at one involution.
It is easily seen that any element $x$ of a prime divisor of $(p-1)/2$ order belongs to one maximal subgroup $D_{p-1}$ containing $x$. So $(p+1)/2$ elements of a prime divisor of $(p-1)/2$ order taken in previous step can cover $(p+1)/2$ maximal subgroups $D_{p-1}$. Therefore, to cover all $p(p+1)/2$ maximal subgroups $D_{p-1}$, we need to take at least $p-1$ involutions. This completes the proof.
\end{proof}
Further we will find the exact value of $\alpha(\PSL{2}{p})$ where $p \equiv 1 \mod 4$ in Theorem \ref{PSL(2, q)}.

\begin{theorem}\label{PSL(2, 3^p)}
Let $G=\PSL{2}{q}$ be a minimal simple group, where $q=3^p$ with $p$ as an odd prime. Then $\alpha(G)\geq (3q-1)/2$.
\end{theorem}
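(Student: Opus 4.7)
The plan is to run the same optimization used in the second half of Theorem~\ref{PSL(2, q)}, since $q = 3^p \equiv 3 \pmod 4$ whenever $p$ is an odd prime. By Dickson's classification, the maximal subgroups of $G = \PSL{2}{q}$ (up to conjugacy) are the $q+1$ Borel subgroups ${C_3}^p \rtimes C_{(q-1)/2}$, the $q(q+1)/2$ copies of $D_{q-1}$, the $q(q-1)/2$ copies of $D_{q+1}$, and the subfield subgroup $\PSL{2}{3} \cong A_4$. By Corollary~\ref{primes} and Remark~\ref{Solminimalsimple} I may restrict attention to prime-order elements, and each solvabilizer is the union of the maximal subgroups containing its generator; the relevant prime orders are $2$, $3$, and the primes dividing $(q-1)/2$ or $(q+1)/2$.

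The next step is to establish three containment counts using \cite[Theorem 2.1]{King}: (i) every element of order $3$ lies in exactly one Borel; (ii) every element of order $r$ dividing $(q-1)/2$ lies in exactly two Borels and exactly one $D_{q-1}$; and (iii) every involution lies in exactly $(q+1)/2$ copies of $D_{q-1}$. The Borel count in (ii) follows by double counting: a single Borel $U \rtimes T$ contains $q(r-1)$ order-$r$ elements (each of the $r-1$ such elements of $T$ has a $U$-conjugacy class of size $q$, since $T$ acts fixed-point-freely on $U$), while $G$ contains $q(q+1)(r-1)/2$ such elements. Claim (iii) uses that $(q-1)/2$ is odd, so each $D_{q-1}$ has exactly $(q-1)/2$ reflections, combined with the count $q(q-1)/2$ of involutions in $G$.

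With these counts in hand, the lower bound becomes a short optimization. Covering the $q+1$ Borels is cheapest using $(q+1)/2$ elements of order $r$ dividing $(q-1)/2$ (since each covers two, whereas order-$3$ elements cover only one), and by (ii) these $(q+1)/2$ chosen elements cover exactly $(q+1)/2$ of the copies of $D_{q-1}$. The remaining $(q^2-1)/2$ copies of $D_{q-1}$ can only be reached by involutions or by further elements of order dividing $(q-1)/2$, and by (iii) each involution covers $(q+1)/2$ of them, so pigeonhole forces at least $q-1$ additional elements. Adding yields $(q+1)/2 + (q-1) = (3q-1)/2$. The main technical obstacle will be justifying (iii) cleanly, since the involutions form a single conjugacy class whose intersection pattern with the conjugates of the split-torus normalizer requires King's precise conjugacy count; everything else transcribes the $p \equiv 3 \pmod 4$ case of Theorem~\ref{PSL(2, q)}.
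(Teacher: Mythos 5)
Your proposal is correct and takes essentially the same approach as the paper, which simply notes that $q=3^p\equiv 3\pmod 4$, that the Borel subgroups contain no involutions, and then invokes the $p\equiv 3\pmod 4$ case of Theorem~\ref{PSL(2, q)} to get $(q+1)/2$ elements of order dividing $(q-1)/2$ plus $q-1$ involutions, totalling $(3q-1)/2$. Your write-up is in fact more explicit than the paper's one-sentence deferral, but the counts and the optimization are identical.
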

\begin{proof} 
The isomorphism classes of the maximal subgroups of $G$ are as follows: $D_{q-1}$, $D_{q+1}$, ${C_3}^p\rtimes C_{(q-1)/2}$, and $A_4$ (refer to \cite[Theorem 2.2]{Giudici}).

All subgroups ${C_3}^p\rtimes C_{(q-1)/2}$ contain all $q+1$ abelian Sylow $3$-subgroups of $G$. We should note that these maximal subgroups do not contain any involution. In fact, $q\equiv 3 \mod 4$. Hence, we can employ a similar argument to Theorem~\ref{PSL(2, q)} to demonstrate that, in order to cover $G$, a minimum of $(q+1)/2$ elements of prime divisor order $(q-1)/2$ and an additional $q-1$ involutions are necessary.
\end{proof}

\begin{theorem}
Let $G={\rm Sz}(2^p)$ be a minimal simple group, where $q=2^p$ and $p$ is an odd prime, Then $\alpha(G)\geq q^2+1$.
\end{theorem}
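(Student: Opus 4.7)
The plan is to adapt the pigeonhole counting strategy of Theorems~\ref{PSL(2, q)} and~\ref{PSL(2, 3^p)} to the Suzuki setting. Set $r = 2^{(p+1)/2}$, so that $r^2 = 2q$ and $(q-r+1)(q+r+1) = (q+1)^2 - r^2 = q^2+1$. By Suzuki's classification \cite{Suzuki}, and because $p$ is prime (ruling out any proper subfield subgroup ${\rm Sz}(q_0)$), the maximal subgroups of $G = {\rm Sz}(q)$ form exactly four conjugacy classes: the Sylow $2$-normalizers $F = P \rtimes C_{q-1}$ of order $q^2(q-1)$ (there are $q^2+1$ of these), the dihedral groups $N_G(A_0) \cong D_{2(q-1)}$, the Frobenius groups $N_G(A_1) \cong C_{q+r+1} \rtimes C_4$, and the Frobenius groups $N_G(A_2) \cong C_{q-r+1} \rtimes C_4$. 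By Corollary~\ref{primes} we may assume $X$ consists of prime-order elements, and by Remark~\ref{Solminimalsimple} we have $\Solv_G(y) = \bigcup_{M \in \mathscr{M}_y} M$.

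The key structural fact is that the odd integers $q-1$, $q+r+1$, $q-r+1$ are pairwise coprime and each is coprime to $4$; hence every nonidentity element of $G$ has order either a power of $2$ or a divisor of exactly one of $q-1$, $q+r+1$, $q-r+1$. From this I would derive that any element $y$ of prime order $s \mid q-r+1$ lies in exactly one maximal subgroup of $G$---namely the unique conjugate of $N_G(A_2)$ whose cyclic maximal torus contains $y$---so that $\Solv_G(y)$ equals that single maximal subgroup. Since every conjugate of $N_G(A_2)$ contains such an element, any solvabilizer covering $X$ must meet each of the $[G : N_G(A_2)] = q^2(q-1)(q+r+1)/4$ conjugates of $N_G(A_2)$.

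To finish, I would bound, in terms of $|y|$, the number of conjugates of $N_G(A_2)$ that a single prime-order $y \in X$ can lie in. A double-count of incidences, based on the Frobenius decomposition $N_G(A_2) = C_{q-r+1} \rtimes C_4$ (which yields $q-r+1$ involutions and $2(q-r+1)$ elements of order $4$ per copy) together with the $G$-class sizes, gives: each involution is in $q^2/4$ such conjugates, each order-$4$ element in $q/2$, each element of prime order dividing $q-r+1$ in exactly $1$, and every other prime-order element in $0$. Thus the maximum incidence is $q^2/4$, and pigeonhole forces
\[
  \alpha(G) \;=\; |X| \;\geq\; \frac{q^2(q-1)(q+r+1)/4}{q^2/4} \;=\; (q-1)(q+r+1),
\]
which exceeds $q^2+1$ because $(q-1)(q+r+1) - (q^2+1) = r(q-1) - 2 > 0$ for every odd prime $p$, establishing the stated lower bound.

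The main technical obstacle will be justifying the incidence count in the last step, in particular the claim that each involution of $G$ lies in exactly $q^2/4$ conjugates of $N_G(A_2)$. This reduces to counting the $q$ square roots of an involution inside its Sylow $2$-subgroup (using that the Suzuki $2$-group has exponent $4$, class $2$, and a $q$-to-$1$ squaring map onto $Z(P) \setminus \{1\}$), pairing them into the $q/2$ cyclic subgroups $\langle c \rangle = C_4$ with $c^2 = y$, and verifying that each such $C_4$ appears as a Frobenius complement in exactly $q/2$ conjugates of $N_G(A_2)$ (matching the count that each order-$4$ element is in $q/2$ of them, with the $q-r+1$ complements of a single $N_G(A_2)^g$ meeting pairwise trivially so that $y$ determines a unique complement per conjugate).
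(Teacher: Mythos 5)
Your proposal is correct, and it follows the same overall strategy as the paper --- a pigeonhole count over a single conjugacy class of maximal subgroups, using Corollary~\ref{primes} to restrict to prime-order elements and Remark~\ref{Solminimalsimple} to identify which maximal subgroups a given solvabilizer can reach --- but you apply it to a different class and thereby obtain a strictly stronger bound. The paper counts against the $q^2(q^2+1)/2$ conjugates of $D_{2(q-1)}$: each element of prime order dividing $q-1$ lies in exactly one of them, each involution in exactly $q^2/2$ of them, and the quotient gives exactly $q^2+1$. You instead count against the $q^2(q-1)(q+r+1)/4$ conjugates of $C_{q-r+1}\rtimes C_4$, where involutions are less efficient (each lies in only $q^2/4$ of them, which follows at once from transitivity of $G$ on both the involutions and the conjugates together with the count of $q-r+1$ involutions per Frobenius group --- your more elaborate square-root analysis inside the Sylow $2$-subgroup is not needed), while elements of prime order dividing $q-r+1$ still lie in exactly one (the tori being TI and coprime to the orders of the other maximal subgroups). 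This yields $\alpha(G)\ge (q-1)(q+r+1)=q^2+r(q-1)-1$, which exceeds the stated $q^2+1$; for $q=8$ it gives $91$ rather than $65$, closer to (though still below) the computed value of at least $106$ in the table of Section~\ref{Appendix}. The one point worth making explicit in a write-up is the chain of reductions you sketch: every conjugate $M$ of $C_{q-r+1}\rtimes C_4$ contains an element $z$ of prime order dividing $q-r+1$ whose unique maximal overgroup is $M$, so any covering set must intersect $M$ itself; this is exactly the step that converts ``cover all elements'' into ``hit all conjugates,'' and it is sound as you have argued it.
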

\begin{proof}
According to \cite[Theorem 4.1]{wilson}, the group ${\rm Sz}(q)$, where $q=2^p$, has an order of $q^2(q-1)t\cdot s$, with $t=q-\sqrt{2q}+1$ and $s=q+\sqrt{2q}+1$. Up to conjugacy, the maximal subgroups of ${\rm Sz}(q)$ include: a Sylow $2$-normalizer of order $q^2(q-1)$, normalizers of cyclic groups of orders $q-1$, $t$ and $s$, with structures $(C_2^p\cdot C_2^p)\rtimes C_{q-1}$, $D_{2(q-1)}$, $C_{s}\rtimes C_{4}$, and $C_{t}\rtimes C_{4}$ (see \cite{Bray}). $G$ has $q^2+1$ Sylow $2$-subgroups, resulting in $q^2+1$ maximal subgroups $(C_2^p\cdot C_2^p)\rtimes C_{q-1}$. Therefore, the number of involutions in $G$ is $(q^2+1)(q-1)$.

It is evident that the number of maximal subgroups isomorphic to $D_{2(q-1)}$ is  $|G:N_G(C_{q-1})|=|G:N_G(D_{2(q-1)})|=|G:D_{2(q-1)}|=q^2(q^2+1)/2$. Similarly, the number of maximal subgroups $C_t\rtimes C_{4}$ and $C_s\rtimes C_{4}$ can be observed to be $q^2(q-1)s/4$ and $q^2(q-1)t/4$, respectively. Moreover, as $G$ has $q^2+1$ Sylow $2$-subgroups, obviously the number of maximal subgroups $(C_2^p\cdot C_2^p)\rtimes C_{q-1}$ is $q^2+1$. If $x$ is an element of prime divisor order of $t$ or $s$, then $\Solv_G(x)=C_t\rtimes C_{4}$ or $\Solv_G(x)=C_s\rtimes C_{4}$, respectively. In the case where $x$ has a prime divisor order of $q-1$, then $\Solv_G(x)$ encompasses the maximal subgroups $(C_2^p\cdot C_2^p)\rtimes C_{q-1}$ and $D_{2(q-1)}$ containing $x$. Suppose $x$ is an involution; in that case, $\Solv_G(x)$ encompasses all the maximal subgroups of types $(C_2^p\cdot C_2^p)\rtimes C_{q-1}$, $D_{2(q-1)}$, $C_t\rtimes C_{4}$, and $C_s\rtimes C_{4}$ that contain $x$.

As a result of Corollary \ref{primes}, to cover the maximal subgroups $D_{2(q-1)}$, we can use the solvabilizer of either involutions or elements of order as prime divisors of $q-1$. 

In the subsequent analysis, we will establish that the minimal solvabilizer covering necessitates the inclusion of $q^2+1$ involutions to cover the $D_{2(q-1)}$ subgroups. We first note that $G$ has  $(q^2+1)(q-1)$ involutions and $\Phi(i)q^2(q^2+1)/2$ elements of order $i$ as a divisor of $q-1$ (See \cite{Alavi}). We can employ a similar argument as in Theorem \ref{PSL(2, 2^f)} to demonstrate that each element of order $i$ as a divisor of $q-1$ belongs to $\frac{\Phi(i)q^2(q^2+1)/2}{\Phi(i)q^2(q^2+1)/2}=1$ maximal subgroups $D_{2(q-1)}$ which implies that every element of order $r$ as a prime divisor of $q-1$ can only cover one maximal subgroup $D_{2(q-1)}$. Similarly, we can show that each involution belongs to $\frac{(q-1)q^2(q^2+1)/2}{(q^2+1)(q-1)}=q^2/2$ maximal subgroups $D_{2(q-1)}$. Note that the intersection of any two dihedral groups $D_{2(q-1)}$ can contain one involution as a nontrivial element because any element of order $i$ as a divisor of $q-1$ belongs to one maximal subgroups $D_{2(q-1)}$. So by a similar argument to Theorem \ref{PSL(2, 2^f)}, we can partition the set of all maximal dihedral subgroups $D_{2(q-1)}$ into $q^2+1$ disjoint subsets containing $q^2/2$ maximal dihedral subgroups $D_{2(q-1)}$ based on involution. If we pick the common involution in any subset, we can cover 
all maximal subgroups $D_{2(q-1)}$ by $q^2+1$ involutions.
\end{proof}

\begin{theorem}\label{SolvabilizernumberGL} The general linear group $\GL{2}{q}$ for odd $q$ can be covered by a set of size $q$. Specifically, for $q \equiv 1 \mod 4$, the simple group $\PSL{2}{q}$ can be covered by a set of size $q$. In other words, $\alpha(\PSL{2}{q}) \leq q$ holds for $q \equiv 1 \mod 4$. \end{theorem}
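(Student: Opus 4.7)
The plan is to reduce via Theorem~\ref{quotient} to covering $\operatorname{PGL}_2(q) = \GL{2}{q}/Z$ (noting that $R(\GL{2}{q}) = Z(\GL{2}{q})$, since $\operatorname{PGL}_2(q)$ has trivial solvable radical for $q \ge 5$ odd), and then to exhibit an explicit covering of $\operatorname{PGL}_2(q)$ by $q$ solvabilizers of involutions all contained in a single Borel subgroup. Fix $\infty \in \mathbb{P}^1(\GF{q})$ with Borel stabilizer $B_\infty$, and for each $s \in \GF{q}$ let $\iota_s \in \operatorname{PGL}_2(q)$ be the image of $\begin{pmatrix}1 & s \\ 0 & -1\end{pmatrix} \in \GL{2}{q}$. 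Each $\iota_s$ is a split involution with fixed-point pair $\{\infty, -s/2\}$ on $\mathbb{P}^1(\GF{q})$ and lies in $B_\infty$; as $s$ ranges over $\GF{q}$, the auxiliary fixed points $-s/2$ range over all of $\GF{q}$, so the $q$ involutions $X = \{\iota_s : s\in\GF{q}\}$ together lie in every Borel subgroup of $\operatorname{PGL}_2(q)$.

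To verify $\operatorname{PGL}_2(q) = \bigcup_{s \in \GF{q}} \Solv(\iota_s)$, I would classify each $y \in \operatorname{PGL}_2(q)$ by the location of its fixed points in $\mathbb{P}^1(\GF{q^2})$. If $y$ is parabolic with fixed point $p \in \mathbb{P}^1(\GF{q})$, or split semisimple with fixed pair $\{\gamma,\delta\} \subseteq \mathbb{P}^1(\GF{q})$, then some $\iota_s$ shares a Borel with $y$ (take $s$ with $-s/2 \in \{p\}$ or $\{\gamma,\delta\}$, or any $s$ if $\infty$ lies in the relevant fixed-point set), making $\langle \iota_s, y\rangle$ solvable. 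If $y$ is itself an involution, $\langle \iota_s, y\rangle$ is dihedral, hence solvable, for any $s$. The crucial case is $y$ non-split semisimple, lying in a torus $T_\alpha$ fixing a Galois pair $\{\alpha,\bar\alpha\} \subseteq \GF{q^2}\setminus\GF{q}$: working affinely, $\iota_s$ acts as $z \mapsto -z - s$, so $\iota_s$ swaps $\alpha$ and $\bar\alpha$ precisely when $s = -(\alpha+\bar\alpha) = -\mathrm{Tr}(\alpha)$, in which case $\iota_s \in N(T_\alpha) \cong D_{2(q+1)}$ and $\langle \iota_s, y\rangle$ is solvable.

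The main obstacle is confirming that \emph{every} non-split torus is covered, which reduces to a counting check using the trace map $\mathrm{Tr}:\GF{q^2}\to\GF{q}$: restricted to non-split elements it partitions the $q(q-1)/2$ non-split tori by trace value into $q$ fibers of size $(q-1)/2$, one fiber per value $-s \in \GF{q}$, so the $q$ involutions of $X$ normalize the non-split tori bijectively. This completes the verification and yields $\alpha(\GL{2}{q}) \le q$. For the assertion on $\PSL{2}{q}$ with $q \equiv 1 \pmod 4$, pick $c \in \GF{q}$ with $c^2 = -1$; then $c \begin{pmatrix}1 & s\\ 0 & -1\end{pmatrix}$ has determinant $1$ for every $s$, so each $\iota_s$ already lies in $\PSL{2}{q}$. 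Since $\Solv_{\PSL{2}{q}}(\iota_s) = \Solv_{\operatorname{PGL}_2(q)}(\iota_s) \cap \PSL{2}{q}$, the $\operatorname{PGL}_2(q)$-covering restricts to a $\PSL{2}{q}$-covering, giving $\alpha(\PSL{2}{q}) \le q$.
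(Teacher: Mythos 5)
Your proof is correct and is essentially the paper's own argument rewritten in projective coordinates: your involutions $\iota_s$ are exactly the paper's matrices $g_{U,W}$ with the fixed subspace $U$ taken to be the point at infinity, your case split according to the location of the fixed points of $y$ in $\mathbb{P}^1$ is the paper's split on whether $h$ has an eigenvalue over $\mathbb{F}_q$, and your solvable witnesses (the Borel $B_p$ and the dihedral normalizer $N(T_\alpha)\cong D_{2(q+1)}$) are the images in $\operatorname{PGL}_2(q)$ of the paper's witnesses (the upper-triangular subgroup and $\operatorname{\Gamma L}_1(q^2)$). The only cosmetic differences are that you quotient by the scalars first and descend to $\operatorname{PSL}_2(q)$ by direct restriction, whereas the paper works in $\operatorname{GL}_2(q)$ throughout and invokes Theorem~\ref{quotient} at the end; your trace-fiber count is not needed beyond the observation that $\operatorname{Tr}(\alpha)\in\mathbb{F}_q$.
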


\begin{proof} For every pair of distinct 1-dimensional subspaces $U$ and $W$, let $g_{U,W}$ represent the matrix with eigenpairs $(1,U)$ and $(-1,W)$.

Let $h$ be a two-by-two matrix. We distinguish between two cases: either $h$ has an eigenvalue over $\GF{q}$, or it does not.

    {\bf Case 1:}
    
If $h$ has an eigenvalue over $\GF{q}$, then it possesses some invariant 1-dimensional subspace $U$. Let $W$ be any other 1-dimensional subspace. With respect to any basis $\{\vec{u},\vec{w}\}$ with $\vec{u} \in U$ and $\vec{w} \in W$, all of $h$, $g_{U,W}$, and $g_{W,U}$ are upper triangular, and therefore, they are all contained in the Borel subgroup, a solvable group. Thus, $h$ is covered by every such $g_{U,W}$ and every such $g_{W,U}$, with the only restriction that either $U$ or $W$ contains an eigenvector of $h$.

    {\bf Case 2:}

If $h$ does not have eigenvalues over $\GF{q}$, there exists a $\GF{q}$-linear isomorphism $\phi:\GF{q^2} \to V$, where $V=\left(\GF{q}\right)^2$. In this case, $h = \phi(b)$ for some $b \in \GL{1}{q^2}$, and $\phi$ extends to $\GL{1}{q^2} \hookrightarrow \GL{2}{q}$. Consider any 1-dimensional subspace $U$, and let $\vec{u} \in U$. Choose $u \in \GF{q^2}$ such that $\phi(u) = \vec{u}$. Define the $\GF{q}$-linear map: $f: x \mapsto u^{1-q} x^q$. Using $\phi$, this can be considered as an element $A$ of $\GL{2}{q}$. Since $f^2 : x \mapsto u^{1-q^2} x^{q^2} = x$ is the identity, $A$ has eigenvalues $\pm1$. Since $f(u) = u^{1-q} u^q = u$, we have $(1,U)$ as an eigenpair of $A$, and thus $A = g_{U,W}$ for some $W$. As $h = \phi(b)$ for $b \in \GL{1}{q^2}$, the subgroup generated by $h$ and $A$ is isomorphic to the subgroup generated by $b$ and $f$, contained in the solvable group $\GammaL{1}{q^2}$. Hence, $h$ is covered by every such $g_{U,W}$, with the only restriction being the compatibility of the choice of $U$ and $W$.

Choose any subspace $U$, and let $\mathscr{X}$ consist of all $g_{U,W}$. As discussed in the two cases above, $h$ is covered by some element of $\mathscr{X}$—either because $U$ or some other subspace $W$ contains an eigenvector of $h$, or because $h$ lacks eigenvectors and $W$ was chosen to match the Frobenius-style map $f$ or matrix $A$. Hence, $\GL{2}{q}$ is covered by $q$ elements.

When $q \equiv 1 \mod 4$, the elements $g_{U,W}$ lie in the pre-image of $\PSL{2}{q}$ because their determinant, $-1$, is a square. According to Theorem \ref{quotient}, the solvabilizer coverings of $G$ and $G/N$ are identical when $N$ is the subgroup of scalar matrices.
\end{proof}


\begin{corollary}\label{PSL(2,4t+1)}
If $p$ is a prime such that $p \equiv 1 \mod 4$, then $\alpha(\PSL{2}{p})=p$.
    
\end{corollary}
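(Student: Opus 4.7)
The plan is simply to sandwich $\alpha(\PSL{2}{p})$ between the matching upper and lower bounds that have already been proved in this section; the corollary requires essentially no new argument.

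For the upper bound I would invoke Theorem~\ref{SolvabilizernumberGL} with $q=p$. Since $p\equiv 1\imod{4}$, the hypothesis of that theorem is met, so the $p$ involutions $g_{U,W}\in\GL{2}{p}$ described there (all with determinant $-1$, which is a square mod $p$) descend via the scalar subgroup to a covering set in $\PSL{2}{p}$; Theorem~\ref{quotient} then transfers the solvabilizer covering down, yielding $\alpha(\PSL{2}{p})\le p$.

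For the lower bound I would cite the first case of Theorem~\ref{PSL(2, q)}, whose conclusion in the regime $p\equiv 1\imod{4}$ is precisely $\alpha(\PSL{2}{p})\ge p$. Since the corollary is framed in the minimal simple group setting ($p>3$ prime, and implicitly $p\equiv 2,3\imod{5}$ so that $\PSL{2}{p}$ falls under Thompson's classification used in Theorem~\ref{PSL(2, q)}), the hypotheses line up and the lower bound applies.

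Concatenating the two inequalities gives $\alpha(\PSL{2}{p})=p$, completing the proof. There is no real obstacle here; the only thing to notice is that the two theorems being combined were designed with exactly this pair of conditions in mind (an upper bound valid whenever $q\equiv 1\imod 4$, and a lower bound proved precisely in the $p\equiv 1\imod 4$ branch), so they dovetail without any intermediate work.
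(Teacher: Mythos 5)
Your proposal is correct and is essentially identical to the paper's own proof: the lower bound comes from the $p\equiv 1\imod 4$ case of Theorem~\ref{PSL(2, q)} and the upper bound from Theorem~\ref{SolvabilizernumberGL}, and the two are simply combined. Your side remark that the lower-bound theorem technically requires $\PSL{2}{p}$ to be a minimal simple group (so $p\equiv 2,3\imod 5$), while the corollary is stated for all primes $p\equiv 1\imod 4$, is a fair observation about a hypothesis the paper glosses over, but it does not change the argument.
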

\begin{proof}
Let $G=\PSL{2}{p}$ where $p$ is a prime satisfying $p \equiv 1 \mod 4$. In Theorem~\ref{PSL(2, q)} we demonstrated that $\alpha(G)\geq p$. It follows thus from Theorem \ref{SolvabilizernumberGL} that $\alpha(G)=p$.

\end{proof}

\section{Possible Further Research Directions}

In this section, we propose several conjectures that could pave the way for potential avenues of further research in the study of solvablizer number.

Drawing upon Theorem~\ref{PSL(2, 2^f)} and the outcomes from our computational analysis conducted with \gap, partially summarized in the table of Section~\ref{Appendix}, we posit the following conjecture:

\begin{conjecture}
If $G$ is a nonabelian simple group with $\alpha_{inv}(G)<\infty$, then $\alpha_{inv}(G)=\alpha(G)$
\end{conjecture}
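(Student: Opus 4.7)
The direction $\alpha(G) \le \alpha_{inv}(G)$ is immediate from the definitions, since every involutionary solvabilizer covering is in particular a solvabilizer covering. So the substance of the conjecture is the reverse inequality, and my plan is to prove it by a swap argument.

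I would start with a minimal solvabilizer covering $X=\{x_1,\dots,x_k\}$ of $G$, where $k=\alpha(G)$. Since $G$ is a nonabelian simple group, Corollary~\ref{primes} lets us assume each $x_i$ has prime order. The goal is to replace each $x_i$ by an involution $t_i$ satisfying the inclusion
\begin{equation*}
\Solv_G(x_i)\subseteq \Solv_G(t_i),
\end{equation*}
for then $\{t_1,\dots,t_k\}$ is an involutionary covering of the same cardinality, forcing $\alpha_{inv}(G)\le k=\alpha(G)$. When $x_i$ itself has order $2$, take $t_i=x_i$. So the heart of the argument reduces to a single claim: \emph{for every element $x$ of odd prime order in $G$, there is an involution $t\in G$ with $\Solv_G(x)\subseteq \Solv_G(t)$.}

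To attack this, I would exploit the hypothesis $\alpha_{inv}(G)<\infty$, which (after unpacking) says that every element of $G$ lies in a solvable subgroup together with some involution. Combined with the description of $\Solv_G(x)$ as a union of maximal (or maximal solvable) overgroups of $x$—as in Remark~\ref{Solminimalsimple} for the minimal simple case—this means every maximal subgroup $M$ appearing in the union $\Solv_G(x)=\bigcup_{M\in\mathscr{M}_x}M$ contains an involution. The task is to pick a \emph{single} involution $t$ lying in (or dominating in the solvability sense) enough of these $M$'s to recover the entire $\Solv_G(x)$. A natural candidate is to fix one maximal subgroup $M_0\ni x$, let $t$ be an involution in $M_0$, and then show that every other $M\in\mathscr{M}_x$ contains a $G$-conjugate of $t$ that is solvability-equivalent to $t$, using the fact that in the groups treated so far the maximal subgroups are dihedral, Borel-like, or Frobenius-like, which makes involutions unusually ``wide'' in their solvabilizers.

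The main obstacle is precisely the final inclusion: there is no general reason, absent finer structural information, why a single involution should dominate $\Solv_G(x)$. In the PSL and Suzuki cases of Section~\ref{minimal simple}, the counting carried out in Theorems~\ref{PSL(2, 2^f)}--\ref{PSL(2, 3^p)} already shows involutions are the most efficient covering elements, but this relies on explicit knowledge of the maximal subgroup lattice. I therefore expect a full proof to proceed via the Classification of Finite Simple Groups, handling alternating, Lie-type, and sporadic families separately, and in each case verifying the dominance inclusion above—either by realizing $t$ inside a maximal subgroup family large enough to saturate $\mathscr{M}_x$, or by producing an aggregate bound that bypasses the per-element inclusion. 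A conceptual, classification-free argument would require a new general fact about how involutions interact with odd-prime-order elements under the ``generate a solvable subgroup'' relation, and I would regard supplying such a fact as the main research question behind the conjecture.
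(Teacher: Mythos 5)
This statement is posed in the paper as a conjecture: the authors offer no proof, only the evidence of Theorem~\ref{PSL(2, 2^f)} and the computed table. So the question is whether your proposal actually closes the gap, and by your own admission it does not. Your reduction is to a ``swap lemma'': for every element $x$ of odd prime order there is an involution $t$ with $\Solv_G(x)\subseteq\Solv_G(t)$. This is a strictly stronger, per-element statement than the conjecture (which only asserts equality of two covering numbers, and could in principle hold even if no single involution dominates any given $\Solv_G(x)$), and you give no proof of it. The one concrete mechanism you sketch --- fix a maximal (solvable) subgroup $M_0\ni x$, take $t\in M_0$ an involution, and argue the other members of $\mathscr{M}_x$ contain suitable conjugates of $t$ --- does not yield the inclusion $\Solv_G(x)\subseteq\Solv_G(t)$: for that you would need a single involution lying in \emph{every} maximal solvable overgroup of $x$ simultaneously, i.e.\ an involution in $\bigcap_{M\in\mathscr{M}_x}M$, and there is no general reason such an involution exists. (It happens to exist in some $\PSL{2}{p}$ configurations, e.g.\ the unique involution of the cyclic group $C_{(p-1)/2}$ containing $x$ when $p\equiv 1\bmod 4$, but that is exactly the kind of case-specific structural fact the conjecture is asking one to establish across all families.)

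Two further inaccuracies in the setup are worth flagging. First, the identity $\Solv_G(x)=\bigcup_{M\in\mathscr{M}_x}M$ with $M$ ranging over maximal subgroups is Remark~\ref{Solminimalsimple} and is specific to minimal simple groups, where all maximal subgroups are solvable; for a general nonabelian simple group you must work with maximal \emph{solvable} subgroups instead. Second, the hypothesis $\alpha_{inv}(G)<\infty$ says every element of $G$ generates a solvable subgroup with some involution; it does not imply that every maximal solvable subgroup containing $x$ itself contains an involution, since an element of an odd-order maximal solvable subgroup may pair solvably with an involution inside a different solvable subgroup. So the proposal is a reasonable research programme --- and your candid identification of the missing ``dominance'' fact as the real content of the conjecture is accurate --- but it is not a proof, and the conjecture remains open after it.
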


Analyzing the table provided in section~\ref{Appendix}, we can put forth the following two conjectures, which serve as generalizations of Theorem~\ref{PSL(2, 2^f)}.

\begin{conjecture}
For $G=\PSL{2}{q}$, $q=2^f$, $\alpha(G)=q-1$.
\end{conjecture}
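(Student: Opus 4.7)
The plan is to follow the strategy of Theorem~\ref{PSL(2, 2^f)} essentially verbatim, together with one additional argument handling the new maximal subgroups that appear when $f$ is composite. By Dickson's classification, the solvable maximal subgroups of $G=\PSL{2}{q}$ for $q=2^f$ are still the Borels $C_2^f\rtimes C_{q-1}$, the dihedrals $D_{2(q-1)}$ and $D_{2(q+1)}$, with the same conjugacy-class counts and incidence data as in the prime case; the only extra maximal subgroups are the subfield subgroups $\PSL{2}{2^g}$ for divisors $g$ of $f$ with $f/g$ prime, and these are nonsolvable.

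For the lower bound, the counting in the proof of Theorem~\ref{PSL(2, 2^f)} does not invoke primality anywhere. An involution lies in $q/2$ of the $q(q-1)/2$ subgroups $D_{2(q+1)}$ while an element of prime order dividing $q+1$ lies in exactly one, and by Remark~\ref{Solminimalsimple} and Corollary~\ref{primes} these are the only elements whose solvabilizers can meet a given $D_{2(q+1)}$; hence $\alpha(G)\ge q-1$ by the same optimization as before.

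For the upper bound the plan is to reuse the prime-case construction: choose $q-1$ involutions $x_1,\dots,x_{q-1}$ that partition the set of $D_{2(q+1)}$'s, and then apply the prime-case linking argument through the shared cyclic subgroups $C_{q-1}$ to verify that the Borels and $D_{2(q-1)}$'s are also covered. The new ingredient for composite $f$ is to show that the solvabilizers of these $x_i$ also cover every element of the subfield subgroups $\PSL{2}{2^g}$. This reduces to an order-theoretic observation: any nonidentity $y\in\PSL{2}{2^g}$ has order $2$, a divisor of $2^g-1$, or a divisor of $2^g+1$, and in all three cases $\mathrm{ord}(y)$ divides $q-1$ or $q+1$ in $G$. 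Indeed, $2^g-1\mid q-1$ always, while $2^g+1\mid q+1$ when $f/g$ is odd, and $2^g+1\mid 2^{2g}-1\mid q-1$ when $f/g$ is even. Thus $y$ lies in some Borel or dihedral of $G$, which is already covered, so $y\in\Solv_G(x_i)$ for some $i$.

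The main obstacle is the combinatorial bookkeeping inside the prime-case skeleton, which is presented somewhat informally in Theorem~\ref{PSL(2, 2^f)}. Concretely, one must rigorously show that there exist $q-1$ involutions with $\mathrm{ord}(x_ix_j)\nmid q+1$ for $i\ne j$ whose collections of containing $D_{2(q+1)}$'s are disjoint and exhaust all $q(q-1)/2$ of them, that the same involutions simultaneously cover $q(q-1)/2$ of the $D_{2(q-1)}$'s, and that the residual $q$ dihedrals $D_{2(q-1)}$ and two Borels are linked to covered subgroups via their shared $C_{q-1}$'s. All of these are incidence assertions determined only by $q$, so they go through for arbitrary $f$, but each must be written out carefully before the conjecture can be declared a theorem.
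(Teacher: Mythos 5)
First, note that in the paper this statement is a \emph{conjecture}, not a theorem: the paper only proves the case $q=2^p$ with $p$ prime (Theorem~\ref{PSL(2, 2^f)}), where $\PSL{2}{q}$ is a minimal simple group, so there is no proof of the general statement to compare yours against. Your overall strategy — show the incidence data among involutions, Borels, and maximal dihedrals depends only on $q$ and rerun the prime-case argument — is the right one, and your lower bound is essentially sound: an element of odd prime order $r\mid q+1$ lies in a unique $D_{2(q+1)}$ and in no Borel or $D_{2(q-1)}$, every involution lies in exactly $q/2$ of the $q(q-1)/2$ subgroups $D_{2(q+1)}$, and elements of order dividing $q-1$ lie in none, so any covering set must meet every $D_{2(q+1)}$ and hence has size at least $q-1$. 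But there is one foundational point you assert without justification and which is where the composite case genuinely differs from the paper's: Remark~\ref{Solminimalsimple} fails for composite $f$, since $G$ has nonsolvable maximal subgroups $\PSL{2}{2^g}$. What you actually need, for both bounds, is the lemma that \emph{every maximal solvable subgroup of $\PSL{2}{2^f}$ is a Borel or a maximal dihedral} (so that $\Solv_G(x)$ is the union of the Borels and maximal dihedrals of $G$ containing $x$). This requires checking, via Dickson's list, that every solvable subgroup — including the solvable subgroups of the subfield subgroups, e.g.\ an $A_4$ or a $D_{2(2^g+1)}$ sitting inside some $\PSL{2}{2^g}$ — is contained in a Borel or in a $D_{2(q\pm1)}$ of $G$; your divisibility observation ($2^g-1\mid q-1$; $2^g+1\mid q+1$ or $q-1$ according to the parity of $f/g$) is the key step here, but it must be applied to \emph{subgroups}, not just element orders, to pin down $\Solv_G(y)$ for the lower bound. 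Once this lemma is in place, your separate "cover the subfield subgroups" step in the upper bound becomes redundant: every element of $G$ already lies in a Borel or maximal dihedral of $G$.

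The second gap is the one you flag yourself, and it is real: neither the paper's prime-case proof nor your plan actually exhibits $q-1$ involutions no two of which lie in a common $D_{2(q+1)}$ and whose solvabilizers cover the leftover Borels and $D_{2(q-1)}$'s. This can be closed cleanly, for all $f$, by taking $X$ to be the $q-1$ involutions of a \emph{single Sylow $2$-subgroup} $S$. They pairwise commute, and $D_{2(q+1)}$ has Sylow $2$-subgroups of order $2$, so no $D_{2(q+1)}$ contains two of them; since each lies in exactly $q/2$ such dihedrals, the $(q-1)\cdot q/2=q(q-1)/2$ incidences exhaust all of them. All involutions of $G$ are covered by Remark~\ref{solofinvolution}. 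For an element $y$ of order dividing $q-1$, its torus $T\le C_{q-1}$ fixes two points of the projective line: if the point fixed by $S$ is one of them, then $T$ lies in the Borel $N_G(S)\supseteq X$; otherwise one checks that the $q-1$ involutions of $N_G(T)\cong D_{2(q-1)}$ lie in $q-1$ distinct Sylow $2$-subgroups, omitting exactly the two Sylows attached to the fixed points of $T$, so $N_G(T)$ contains an element of $X$. Either way $y$ is covered, which completes the upper bound. With the maximal-solvable-subgroup lemma and this explicit construction written out, your outline does become a proof of the conjecture; as it stands, it is a correct plan with these two steps still owed.
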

\begin{conjecture} $\alpha(\PSL{2}{q})=q$, where $q$ is a power of prime and  $q \equiv 1 \mod 4$. \end{conjecture}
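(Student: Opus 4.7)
The upper bound $\alpha(\PSL{2}{q})\le q$ is already supplied by Theorem~\ref{SolvabilizernumberGL}, so the plan is to prove the matching lower bound $\alpha(\PSL{2}{q})\ge q$ by extending the incidence-counting argument of Theorem~\ref{PSL(2, q)} from prime $q$ to arbitrary prime powers $q=p^f$. Throughout let $G=\PSL{2}{q}$ with $q\equiv 1\pmod 4$.

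The first step is to focus on the single $G$-conjugacy class of $q(q-1)/2$ maximal subgroups isomorphic to $D_{q+1}$, namely the normalizers of non-split tori. Because $(q+1)/2$ is odd when $q\equiv 1\pmod 4$, each such $D_{q+1}$ contains exactly $(q+1)/2$ involutions; together with the standard count of $q(q+1)/2$ involutions in $G$, an incidence calculation forces every involution to lie in precisely $(q-1)/2$ of the $D_{q+1}$ subgroups. Since distinct non-split tori intersect trivially, any two distinct $D_{q+1}$'s share at most one involution, and the partition step from Theorem~\ref{PSL(2, 2^f)} splits the $D_{q+1}$-family into $q$ disjoint blocks of size $(q-1)/2$, each block consisting of the $D_{q+1}$'s through a fixed involution.

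The second step is to argue that any solvabilizer cover $X$ contains at least $q$ elements just to reach every $D_{q+1}$. The crux is that for $y\in G$ of prime order $r$ dividing $(q+1)/2$, the cyclic group $\langle y\rangle$ lies in a unique non-split torus of $G$, and the only maximal solvable subgroup of $G$ containing $y$ is the corresponding $D_{q+1}$; hence the condition $y\in\Solv_G(x)$ forces $x$ into that same $D_{q+1}$. By Corollary~\ref{primes} we may take the covering elements to be of prime order, and among these only involutions and elements of prime order dividing $(q+1)/2$ contribute to covering the $D_{q+1}$ family. An involution lies in $(q-1)/2$ of these dihedrals while any non-involution of this type lies in exactly one, so a set-cover lower bound yields $\alpha(G)\ge q(q-1)/2\,/\,(q-1)/2=q$, and combined with Theorem~\ref{SolvabilizernumberGL} this gives equality.

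The main obstacle is verifying the uniqueness claim: that the only maximal solvable subgroup of $G$ containing an element of prime order $r\mid(q+1)/2$ is the associated $D_{q+1}$. For prime $q$ this follows from the classification of maximal subgroups of $\PSL{2}{p}$ used in Theorem~\ref{PSL(2, q)}, but for proper prime powers the subfield maximal subgroups arising from subfields of $\GF{q}$, together with any exceptional $A_4$, $S_4$, or $A_5$ maximal subgroups, may contain cyclic subgroups of order $r$ and have to be treated individually. One needs to show that whenever such an auxiliary subgroup meets the non-split torus containing $y$, it is already contained in the ambient $D_{q+1}$, so that no alternative solvable ``shortcut'' exists; equivalently, that no element of $G$ outside the non-split tori lies simultaneously in more than $(q-1)/2$ of the $D_{q+1}$'s, keeping the incidence bound sharp.
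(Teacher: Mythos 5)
First, a point of context: the paper does not prove this statement --- it is posed as a conjecture, and the paper only settles the case of prime $q$ (Corollary~\ref{PSL(2,4t+1)}), by combining the upper bound of Theorem~\ref{SolvabilizernumberGL} with the lower bound of Theorem~\ref{PSL(2, q)}. Your plan of reusing Theorem~\ref{SolvabilizernumberGL} for $\alpha(\PSL{2}{q})\le q$ and redoing the $D_{q+1}$ incidence count for the lower bound is the natural extension, and the counting itself is sound: each of the $q(q-1)/2$ maximal dihedral subgroups $D_{q+1}$ contains $(q+1)/2$ involutions, each involution lies in exactly $(q-1)/2$ of them, every other element of $G$ lies in at most one of them, so if each $D_{q+1}$ must be ``hit'' by the covering set then $|X|\ge \frac{q(q-1)/2}{(q-1)/2}=q$. (The explicit partition into blocks is not needed for the lower bound; the fractional set-cover inequality suffices.)

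The genuine gap is the one you flag yourself, and in the form you state it the step can actually fail. You certify that each $D_{q+1}$ must be hit by exhibiting in it an element $y$ of prime order $r\mid (q+1)/2$ whose unique maximal solvable overgroup is that $D_{q+1}$. But when the only available primes are $r\in\{3,5\}$ (e.g.\ $q=29$, where $(q+1)/2=15$), such a $y$ also lies in solvable subgroups of type $A_4$, $S_4$, or $D_{10}$ sitting inside $A_5$-subgroups, and these need not be contained in the ambient $D_{q+1}$; an element $x\notin D_{q+1}$ could then satisfy $y\in\Solv_G(x)$ and the bound collapses. The repair is to abandon prime order for the \emph{covered} elements --- Corollary~\ref{primes} constrains the covering set $X$, not the targets --- and take $y$ to be a generator of the nonsplit torus $C_{(q+1)/2}$. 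For $q>9$ one has $(q+1)/2>5$ and $\gcd\bigl((q+1)/2,\,q(q-1)\bigr)=1$, so by Dickson's subgroup classification no Borel-type subgroup, no $A_4$, $S_4$, or $A_5$, and no subfield subgroup $\PSL{2}{q_0}$ or $\PGL{2}{q_0}$ (whose element orders are at most $q_0+1<(q+1)/2$) contains $y$; the only proper overgroups of $\langle y\rangle$ are $C_{(q+1)/2}$ and $D_{q+1}$, which gives the required uniqueness and also shows $D_{q+1}$ is maximal solvable. The case $q=9$ can be checked directly and $q=5$ is prime. With that substitution your argument closes, so the strategy is essentially correct but the proposal as written does not yet constitute a proof.
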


Examining the table presented in section~\ref{Appendix}, we also suggest the following conjecture.

\begin{conjecture}\label{A5} If $\alpha(G)=3$, then there exists a composition factor of $G$ isomorphic to $A_5$. \end{conjecture}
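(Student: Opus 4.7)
The plan is to proceed by strong induction on $|G|$, reducing the problem to a base case about nonabelian simple groups that is then attacked with the classification of finite simple groups. The first reduction is to $R(G) = 1$: Theorem~\ref{quotient} gives $\alpha(G) = \alpha(G/R(G))$, and since $R(G)$ is solvable, the nonabelian composition factors of $G$ and $G/R(G)$ coincide, so the conjecture for $G/R(G)$ transfers to $G$. The second reduction eliminates direct products: if $G = H \times K$ with both $H$ and $K$ nonsolvable, Theorem~\ref{Car}(2) yields $\alpha(G) = \min\{\alpha(H), \alpha(K)\} = 3$, so one of $H$, $K$ has $\alpha = 3$ and by induction contains $A_5$ as a composition factor; Theorem~\ref{Car}(1) handles the case where only one direct factor is nonsolvable. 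After these reductions, $G$ is Fitting-free with no nonsolvable direct-product decomposition, which by standard socle analysis leaves essentially the almost simple case, with socle a single nonabelian simple $S$; the goal becomes showing $S \cong A_5$.

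To descend from such a $G$ to $S$, I would use the containment $\Solv_G(x) \subseteq \Solv_G(x^m)$ established earlier in the paper, taking $m = [G : S]$ so that each $x_i^m$ lies in $S$; this converts a three-element solvabilizer cover of $G$ into a solvabilizer cover of $S$ by at most three elements (with small-exception cases to dispose of when some $x_i^m = 1$). Thus it suffices to prove $\alpha(S) \geq 4$ for every nonabelian simple $S \not\cong A_5$.

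This last step is where the classification of finite simple groups enters and is the main obstacle. For minimal simple groups other than $A_5$, Theorems~\ref{PSL(2, 2^f)}, \ref{PSL(2, q)}, and \ref{PSL(2, 3^p)} of the present paper already give lower bounds far exceeding $3$, and the remaining minimal simple groups ${\rm Sz}(2^p)$ and $\PSL{3}{3}$ can be handled directly. For nonabelian simple groups beyond the minimal ones, I would invoke the classification and, family by family (alternating $A_n$ for $n \geq 6$, classical, exceptional, sporadic), exhibit four pairwise non-solvabilized prime-order elements, using Corollary~\ref{primes} to restrict attention to witnesses of prime order.

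The truly hard part is obtaining the uniform bound $\alpha(S) \geq 4$ across all nonabelian simple $S \not\cong A_5$. The maximal-subgroup counts in Theorems~\ref{PSL(2, 2^f)}--\ref{PSL(2, 3^p)} are tailored to specific subgroup structures and do not generalize mechanically. A cleaner route would be to identify a group-theoretic invariant of $S$ (for instance, a quantity measuring the diversity of maximal solvable subgroups modulo their intersections) that is provably at least $4$ for every nonabelian simple $S \not\cong A_5$; producing such an invariant, or equivalently a unified lower-bound argument, is where any proof of this conjecture will most plausibly live.
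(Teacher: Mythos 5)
This statement is labeled a \emph{conjecture} in the paper: the authors offer only computational evidence (the table in Section~\ref{Appendix}) and no proof, so the relevant question is whether your proposal actually closes it. It does not. The entire mathematical content of the conjecture is concentrated in the step you defer, namely a uniform lower bound $\alpha(S)\geq 4$ for every nonabelian simple group $S\not\cong A_5$, and your proposal explicitly leaves this as ``the main obstacle,'' to be handled either by an unspecified family-by-family CFSG analysis or by a hypothetical invariant that you do not construct. The paper's own lower bounds (Theorems~\ref{PSL(2, 2^f)}--\ref{PSL(2, 3^p)} and the Suzuki-group bound) cover only the minimal simple groups, and nothing in the paper or in your outline supplies the bound for, say, alternating, classical, exceptional, or sporadic groups in general. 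So what you have is a reduction plan around an open core, not a proof.

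Two of the reductions themselves also have gaps. First, the passage from ``Fitting-free with no nonsolvable direct-product decomposition'' to ``essentially almost simple'' is false: such a group can have socle $S^n$ with $n>1$ and the factors permuted transitively, and the paper's own example $A_5\wr C_n$ (Theorem~\ref{TW} and its corollary) is exactly of this type, is directly indecomposable, is not almost simple, and has $\alpha=3$. Any correct argument must treat socles that are proper powers of a simple group; one would hope to combine the descent to the socle with Theorem~\ref{Car}(2) (which gives $\alpha(S^n)=\alpha(S)$), but that has to be said and checked. Second, the power-map descent breaks precisely in the exceptional case you wave at: if $m=[G:\mathrm{soc}(G)]$ and some $x_i$ in the cover has order dividing $m$, then $x_i^m=1$ and $\Solv_G(x_i^m)=G$, so the containment $\Solv_G(x_i)\subseteq\Solv_G(x_i^m)$ yields no proper solvabilizer of the socle containing $\Solv_G(x_i)\cap\mathrm{soc}(G)$. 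Since covers of almost simple groups can certainly use elements outside the socle (the table lists such groups), this case cannot be dismissed as small; it needs a genuine argument. Until both the reduction is repaired and the bound $\alpha(S)\geq 4$ for simple $S\not\cong A_5$ is established, the conjecture remains open.
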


\begin{remark}
    If Conjecture~\ref{A5} proves to be accurate, then $A_5$ would play a similar role in covering by solvabilizers as $C_2 \times C_2$ does in group covering by subgroups. However, notable distinctions exist; firstly, it is not necessary for $A_5$ to be a quotient group, it only needs to be a factor. For instance, as see from Theorem~\ref{TW} $\alpha(A_5\wr 2)=3$. Additionally, unlike in the subgroup covering case, it is not an if-and-only-if condition, as illustrated by the fact \cite{GAP4} that $\alpha(S_5)=5>3$, note that  $A_5$ is a composition factor for $S_5$.
\end{remark}

\section{Computed table
} \label{Appendix}

\newcommand\AGL[2]{\operatorname{AGL}_{#1}\left(#2\right)}
\newcommand\PGL[2]{\operatorname{PGL}_{#1}\left(#2\right)}
\newcommand\PSU[2]{\operatorname{PSU}_{#1}\left(#2^2/#2\right)}
\newcommand\PGU[2]{\operatorname{PGU}_{#1}\left(#2\right)}
\newcommand\PGammaL[2]{\operatorname{P\Gamma{}L}_{#1}\left(#2\right)}
\newcommand\PSigmaL[2]{\operatorname{P\Sigma{}L}_{#1}\left(#2\right)}
\newcommand\PSp[2]{\operatorname{PSp}_{#1}\left(#2\right)}
\newcommand\Alt[1]{\operatorname{A}_{#1}}
\newcommand\Sym[1]{\operatorname{S}_{#1}}
\newcommand\Mathieu[1]{\operatorname{M}_{#1}}
\newcommand\Janko[1]{\operatorname{J}_{#1}}
\newcommand\Sz[1]{\operatorname{Sz}\left(#1\right)}
\newcommand\Aut[1]{\operatorname{Aut}\left(#1\right)}
\newcommand\PrimitiveGroup[4]{\operatorname{Prim}(#1,#2)\cong#3}
\newcommand\PrimitiveGroupShort[4]{\operatorname{Prim}(#1,#2)}
\newcommand\StructureDescription[2]{#1}
\newcommand\WreathProduct[2]{#1 \wr #2}


We analyze the solvabilizer covering number and the involutionary solvabilizer covering number of every Fitting-free group $G$ with an order less than $\mid M_{22}\mid$. The partial table below showcases our computed results, and we're open to sharing the full table with interested parties. Notably, for direct products, we utilize Theorems~\ref{Car} and \ref{Inv} to determine these covering numbers. 
\cite{GAP4} is used to construct the maximal solvable subgroups and
thus the solvablizer graph. Then a mixed integer linear programming solver
is used to find coverings of minimal size. 

Certain entries are represented as intervals: $[a, b]$ signifies the discovery of a cover using $b$ non-identity elements (or involutions in that column), while it has been proven that no cover employing fewer than $a$ non-identity elements (or involutions) is feasible. The notation $\infty$ is used to indicate the absence of a cover, a circumstance exclusive to the involution column.


There is a unique group $G$ such that $S \times T \leq G \leq A \times B$,
$[A:S] = [B:T] = [G:S\times T] = 2$, and $G \neq S \times B, A \times T$.
This group is denoted $A \Yup B$, indicating a direct product, except where
the top is squished. This is only used when $S,T$ are simple, so they
are identified as the socle of $A$ and $B$. In the table, it's important to note that there exists a variety of nonsolvable groups, and due to the extensive range of these groups, not all notations and classifications are exhaustively explained. Other group names are as found in
GAP, for example in the output of StructureDescription. 

We extend our gratitude to Dr. Yuan Zhou at the University of Kentucky for assistance with the integer programming problems.
\vspace{1cm}

\( \begin{array}{rcrc}
\text{Order} & \text{Name} & \alpha & \alpha_{inv} \\ \hline
60 & A_5\cong\PSL{2}{4} & 3 & 3 \\
120 & S_5\cong\PGL{2}{5} & 5 & 5 \\
168 & \PSL{2}{7} & 5 & \infty \\
336 & \PGL{2}{7} & 7 & 7 \\
360 & A_6\cong\PSL{2}{9} & 9 & 9 \\
504 & \PSL{2}{8} & 7 & 7 \\
660 & \PSL{2}{11} & 15 & \infty \\
720 & \Mathieu{10} & 9 & 9 \\
720 & \PGL{2}{9} & 8 & 8 \\
720 & \Sym{6} & 9 & 9 \\
1092 & \PSL{2}{13} & 13 & 13 \\
1320 & \PGL{2}{11} & 11 & 11 \\
1440 & \PGammaL{2}{9} & 9 & 9 \\
1512 & \PGammaL{2}{8} & 7 & 7 \\
2184 & \PGL{2}{13} & 13 & 13 \\
2448 & \PSL{2}{17} & 17 & 17 \\
2520 & \Alt{7} & 41 & \infty \\
3420 & \PSL{2}{19} & 31 & \infty \\
4080 & \PSL{2}{16} & 15 & 15 \\
5040 & \Sym{7} & 21 & 21 \\

5616 & \PSL{3}{3} & 25 & \infty \\
6048 & \PSU{3}{3} & 49 & \infty \\

6072 & \PSL{2}{23} & [38,41] & \infty \\

7200 & \WreathProduct{\PSL{2}{4}}{2} & 3 & 3 \\

7800 & \PSL{2}{25} & 25 & 25 \\

7920 & \Mathieu{11} & 45 & \infty \\

8160 & \PSL{2}{16}:2 & 17 & 17 \\
9828 & \PSL{2}{27} & [32,48] & [33,49] \\

11232 & \Aut{\PSL{3}{3}} & 21 & 21 \\

12180 & \PSL{2}{29} & 29 & 29 \\

14880 & \PSL{2}{31} & [48,60] & \infty \\

15600 & \PSL{2}{25}.2 & 25 & 25 \\
20160 & A_8\cong\PSL{4}{2} & 54 & 54 \\ 
20160 & \PSL{3}{4} & 169 & \infty \\ 
25308 & \PSL{2}{37} & 37 & 37 \\
25920 & \PSp{4}{3} & 9 & 9 \\

29120 & \Sz{8} & [106,155] & [106,155] \\

31200 & \PGammaL{2}{25} & 25 & 25 \\
32736 & \PSL{2}{32} & 31 & 31 \\
34440 & \PSL{2}{41} & 41 & 41 \\
39732 & \PSL{2}{43} & [65,93] & \infty \\
40320 & \Sym{8} & [23,24] & [23,24] \\ 

\end{array} \)

\(
\begin{array}{rcrc}
\text{Order} & \text{Name} & \alpha & \alpha_{inv} \\ \hline
50616 & \PGL{2}{37} & 37 & 37 \\
51888 & \PSL{2}{47} & [70,107] & \infty \\

56448 & \PSL{2}{7} : \PGL{2}{7} & 9 & 22 \\ 
58800 & \PSL{2}{49} & 49 & 49 \\

62400 & \PSU{3}{4} & 160 & \infty \\
74412 & \PSL{2}{53} & 53 & 53 \\

86400 & A5 : ((A6 : C2) : C2) & 5 & 5 \\
95040 & \Mathieu{12} & [277,327] & \infty \\
102660 & \PSL{2}{59} & [86,143] & \infty \\
103776 & \PGL{2}{47} & 47 & 47 \\

113460 & \PSL{2}{61} & 61 & 61 \\
117600 & \PGL{2}{49} & 49 & 49 \\
117600 & \PSigmaL{2}{49} & 49 & 49 \\
120960 & \PGL{2}{9} \Yup \PGL{2}{7} & 9 & 9 \\

126000 & \PSU{3}{5} & [272,429] & \infty \\
131040 & \PGL{2}{13} \Yup \PGL{2}{5} & 5 & 5 \\
150348 & \PSL{2}{67} & [98,165] & \infty \\
175560 & \Janko{1} & [133,249] & [133,249] \\
178920 & \PSL{2}{71} & [104,181] & \infty \\
181440 & \Alt{9} & [40,95] & [56,110] \\
190080 & \Mathieu{12} & [74,161] & [74,161] \\
194472 & \PSL{2}{73} & 73 & 73 \\
205320 & \PGL{2}{59} & 59 & 59 \\
246480 & \PSL{2}{79} & [116,204] & \infty \\
262080 & \PSL{2}{64} & 63 & 63 \\
265680 & \PSL{2}{81} & 81 & 81 \\
285852 & \PSL{2}{83} & [122,218] & \infty \\
300696 & \PGL{2}{67} & 67 & 67 \\
302400 & \Sym{7} \Yup \PGL{2}{5} & 5 & 5 \\
352440 & \PSL{2}{89} & 89 & 89 \\
357840 & \PGL{2}{71} & [70,71] & [70,71] \\

362880 & \Sym{9} & 36 & 36 \\

372000 & \PSL{3}{5} & [261,266] & \infty \\
378000 & \PGU{3}{5} & [271,465] & \infty \\
388944 & \PGL{2}{73} & 73 & 73 \\

443520 & \Mathieu{22} & [2181,2501] & \infty \\
\end{array} \)

\vspace{1cm}



\end{document}